\documentclass[12pt]{amsart}
\usepackage{amsmath, amssymb, amstext, amsthm, eqnarray, amscd}
\usepackage{tikz}
\usepackage{tikz-cd}
\usetikzlibrary{%
  matrix,%
  calc,%
  arrows%
}
\usepackage{fullpage}

\usepackage{float}
\usepackage{mathtools}
\usepackage{mathabx}
\usepackage{graphicx}
\usepackage[mathscr]{euscript}
\usepackage{enumerate}
\usepackage{hyperref}
\usepackage{pgfkeys}
\usepackage{fullpage}
\usepackage{pst-node}
\usepackage{lastpage}
\usepackage{fancyhdr}
\usepackage{multirow}
\allowdisplaybreaks
\usepackage{graphicx}
\usepackage{tikz,color,soul}
\usepackage{chngcntr}
\usepackage{stmaryrd}
\usepackage{tabularx}

\usepackage{enumitem}
\usepackage{kantlipsum}
\usepackage{array}
\usepackage{cleveref}
\usepackage[english]{babel}
\usepackage{enumitem}
\def\multiset#1#2{\ensuremath{\left(\kern-.3em\left(\genfrac{}{}{0pt}{}{#1}{#2}\right)\kern-.3em\right)}}

\newtheorem{theorem}{Theorem}[section]

\newtheorem{lem}[theorem]{Lemma}
\newtheorem{cor}[theorem]{Corollary}
\theoremstyle{definition}
\newtheorem{definition}[theorem]{Definition}
\newtheorem{eg}[theorem]{Example}
\newtheorem{remark}[theorem]{Remark}

\numberwithin{equation}{section}

\newcommand{\nothing}[1][]{%
    \tikz[overlay,remember picture]{
    \draw[red]
      ($(l4)+(-0.7em,0.7em)$) rectangle
      ($(r4)+(0.5em,-0.5em)$);}
}

\title{Invertibility of Anticommutator and Commutators of Higher Degree of $n$-potent Elements}

\author{Vivek Bhabani Lama}
   \address{Department of Mathematics, Indian Institute of Technology Kharagpur, Kharagpur, INDIA - 721302.}
   \email{ v.bhabani.lama@gmail.com, vivekbhabanilama@kgpian.iitkgp.ac.in}

\author{Suhas B N }
    \address{Department of Mathematics, The National Institute of Engineering (Autonomous under Visveswaraya Technological University), Mysuru, India}
    \email{suhasbn@nie.ac.in, suhasbn.sjc@gmail.com}

\keywords{$n$-potent elements, commutators, anticommutators, invertible elements}
\subjclass[2020]{16N40, 16U60, 12E15, 16E50} 

\date{}

\makeindex
\begin{document}
\maketitle


\section*{Abstract}
We introduce and study the notion of commutators and anti-commutator of higher degrees for ring elements, which generalize the concept of commutator and anti-commutator of ring elements. In particular, we study the invertibility of the degree $n$ commutators and anticommutator of $n$-potent elements. Under natural conditions on the ring, we relate the invertibility of degree $n$ commutators and anticommutator of $n$-potent elements. We also relate the invertibility of degree $n$ commutators and anticommutator of $n$-potent elements with the invertibililty of higher commutators and anticommutator. Finally, we study ring extensions in which the invertibility of degree $n$ commutators and anticommutator of $n$-potent elements is inherited from its base ring. 

\section{Introduction}

For an associative ring $S$ with unity, the commutators and anti-commutators encode important structural information about $S$. As is well known, the commutator of two elements $a,b \in S$, denoted by $[a,b]$, is the element $ab-ba$, and measures how far the two elements are from commuting. It is easy to see from the very definition that $[a,b] = -[b,a]$, conveying the anti-symmetric nature of the commutator. The anti-commutator of $a,b$ on the other hand, is the symmetric analogue of the commutator. More precisely, it refers to the element $ab+ba$, and is denoted by $\langle a,b \rangle$. A direct computation shows that $ab = \frac{1}{2}([a,b]+\langle a,b \rangle)$, whenever $2$ is invertible in the base ring. For preliminaries on commutators and anti-commutators, one can see for instance \cite{kaufman2014commutators}, \cite{Khurana01072012} and \cite{khurana2018idempotents}. 

In recent years, considerable attention has been devoted to understanding how invertibility conditions on commutators and anti-commutators influence the structure of rings, leading to several striking characterization theorems for matrix rings and related classes of rings. In \cite{khurana2018idempotents}, Khurana and Lam initiated the study of the invertible commutators and anti-commutators of idempotent elements in a ring, motivated by an excercise of Kaplansky concerning a pair of idempotents with an invertible commutator (\cite[p. 25]{kaplansky1968rings}). They introduced two ring-theoretic properties: the \textit{Property $K$}, requiring the existence of idempotents whose commutator is invertible, and the \textit{Property $\overline{K}$}, requiring the existence of idempotents whose anti-commutator is invertible. Using Bott--Duffin invertibility (\cite{Bott1953OnTA}) together with several identities for idempotents due to Kato (\cite[(I.4.34), (I.4.44)]{kato1995perturbation}) and Koliha--Rakočević (\cite{koliha2002invertibility} and \cite{koliha2003invertibility}), they established fundamental relationships between these two properties, showing in particular that Property $K$ always implies Property $\overline{K}$. 

One of the main results of their article  is that they provide complete characterization of rings satisfying Property $K$: such rings are precisely the matrix rings $M_{2}(S)$ over rings S for which the multiplicative identity is expressible as a sum of two units. Building on this characterization, they also determine exactly when matrix rings over local or commutative rings satisfy Properties $K$ and 
$\overline{K}$, establish several equivalent formulations of these properties, and derive new criteria for $2 \times 2$ matrix rings in terms of invertible commutators and anti-commutators involving idempotent, nilpotent and involutory elements. 

Motivated by their results, in the present article, we introduce the notion of \textit{degree n commutators and anti-commutator} (see Definition \ref{DEF: Anticommutator of Higher degree}) of ring elements and study their invertibility for $n$-potent elements (any element $x$ in a ring is called an $n$-potent if $x^n = x$). An interesting result in \cite{khurana2018idempotents} is that the invertibility of the commutator of idempotent elements of a ring also implies the invertibility of their anti-commutator (see \cite[Theorem~2.13 (3)]{khurana2018idempotents}). Our first main result is a generalization of this result in which we relate the invertibility of the $n$-commutators ($[a,b]_n^L$ and $[a,b]_n^R$) of two $n$-potent elements with the invertibility of their $n$-anti-commutator ($<a,b>_n$). 

\vskip 2mm
\noindent
	{\bf Theorem A \bf (Theorem \ref{THM: Invertability of anticommutator vs commutator}).}
 \textit{  Let $S$ be a ring and $a,b$ be $n$-potent elements in $S$. Then, for any $n \geq 2$,  $a-b$ and $<a,b>_n$ are units in $S$ if and only if $a+b$, $[a,b]_n^L$ and $[a,b]_n^R$ are units in $S$.}
\vspace{2mm}

Theorem $A$ also recovers the result for idempotent elements in \cite[Theorem 2.13 (3)]{khurana2018idempotents} as a corollary (see \Cref{COR: Corollary for anticommutator vs commutator for n=2}). Next, we prove that an $n$-potent element automatically retains its potency at infinitely many higher powers, revealing a recurring pattern in its behavior (see \Cref{LEM: when is x^k=x?}). Building upon this, we relate the invertibility of the $n$-commutators of two $n$-potent elements with the invertibility of their higher degree commutators.

\vskip 2mm
\noindent
	{\bf Theorem B \bf (Theorem \ref{THM: Higher vs Lower}).}
 \textit{     Let $S$ be a ring and $a,b$ be two $n$-potent elements in $S$ such that $<a,b>_n$ , $[a,b]_n^L$ and $[a,b]_n^R$ are units in $S$. Then the following results hold:
    \begin{enumerate}
        \item For every $m >n$ such that $a^m=a$ and $b^m=b$, the anti-commutator of degree $m$, $<a,b>_m$, is a unit in $S$ if and only if $\displaystyle\sum_{k=0}^{m-2}(a+b)^k$ is a unit in $S$.
        \item For every $m >n$ such that $a^m=a$ and $b^m=b$, the commutators of degree $m$,  $[a,b]_m^L$ and $[a,b]_m^R$, are units in $S$ if and only if $\displaystyle\sum_{k=0}^{m-2}(a+b)^k$ is a unit in $S$. 
    \end{enumerate}}
\vspace{2mm}

Finally, we study the invertibility of $n$-commutators and $n$-anticommutator of two $n$-potent elements in rings with only trivial idempotents and give particular insight to the invertibility of $3$-commutators and $3$-anticommutator of tripotent elements in their matrix ring of size $2\times 2$ (see \Cref{THM: Commutators and Anticommutators in Rings with only trivial idempotents} and \Cref{THM: Commutators and Anticommutators in matrix rings over Rings with only trivial idempotents}). Analogous to the properties $K$ and $\overline{K}$, we define properties $K_n$ and $\overline{K_n}$ and prove that these properties in certain subrings of matrix rings are inherited from their base rings (see \Cref{THM: Property Kn for D_m(S)}, \ref{THM: Property Kn for T_m(S)}  and \ref{THM: Property Kn for triangular matrix rings}). \\

\textbf{Organization of the paper.} In
Section~\ref{Section S2}, we introduce the notion of commutators and anti-commutator of higher degrees for ring elements. Further, we  prove our main results \Cref{THM: Invertability of anticommutator vs commutator} and \Cref{THM: Higher vs Lower}. Finally, in Section \ref{Section S3}, we define Properties $K_n$ and $\overline{K}_n$ and show that certain subrings of matrix rings inherit these properties from their base rings (\Cref{THM: Property Kn for D_m(S)}, \Cref{THM: Property Kn for T_m(S)}  and \Cref{THM: Property Kn for triangular matrix rings}). 

\section{Commutators and Anti-commutators of Higher Degrees}\label{Section S2}
 In this section, we introduce the notion of commutators and anti-commutator of higher degrees of ring elements. In particular, we focus on the invertibility of commutators and anti-commutator of higher degrees of $n$-potent elements. From now on, by a ring, we mean an associative ring with unity (not necessarily commutative) unless stated otherwise.
 


\begin{definition}\label{DEF: Anticommutator of Higher degree}
Let $S$ be a ring and $a,b \in S$. The \textit{anti-commutator of $a\text{ and }b$ of degree $n \ge 2$} is given by:
   $$<a,b>_n :=(a+b)^n-a^n-b^n$$
\end{definition}
\begin{remark} \label{remark on anti-commutator}
    The following are easy to observe:
    \begin{enumerate}
        \item Let $a,b $ be any two elements in $S$. Then, for any $n \geq 2$, the anti-commutator of $a\text{ and }b$ of degree $n$ can also be expressed as \[<a,b>_n = \sum\limits_{k=1}^{n-1}W_{n,k}(a,b),\]
where $W_{n,k}(a,b)$ refers to all words in $a$ and $b$ of length $n$ in which $a$ appears exactly $k$ times.
 
    \item Let $a,b $ be any two elements in $S$. For $n=2,$ the anti-commutator of $a \text{ and }b$ of degree $2$ is nothing but the element $ab+ba$, the anti-commutator of$\ a \text{ and }b$ in the usual sense. In our notation, $<a,b>_2=(a+b)^2-a^2-b^2=ab+ba=<a,b>$. 
    \item Suppose that $a$ and $b$ are two $n$-potents in $S$. Then from the first remark it follows that 
    \begin{eqnarray*}
        <a,b>_n &=& (a+b)((a+b)^{n-1}-1) \\
                &=& (a+b)(a+b-1)\sum\limits_{k=0}^{n-2}(a+b)^k.
    \end{eqnarray*}
    
 \end{enumerate}
\end{remark}

While Definition \ref{DEF: Anticommutator of Higher degree} generalizes nicely for anti-commutators of higher degrees, an analogous definition for commutators of higher degree is slightly more complicated. We now define two ``commutators of higher degree" that generalize the standard notion of commutators found in literature.
\begin{definition}\label{DEF: commutators of higher degree}  Let $a,b $ be any two elements in $S$.  Then, for any $n \geq 2$,
    \begin{enumerate}
       \item The left commutator of $a\text{ and }b$ of degree $n$ is denoted by $[a,b]_n^L$, and is defined by
    $$[a,b]_n^L =(a-b)(a+b)^{n-1}-(a^n-b^n).$$
       \item The right commutator of $a\text{ and }b$ of degree $n$ is denoted by $[a,b]_n^R$, and is defined by
  $$[a,b]_n^R =(a+b)^{n-1}(a-b)-(a^n-b^n).$$
         
    \end{enumerate}
\end{definition}

\begin{remark}\label{REM: Remark on commutators of higher degree}
    Analogous to Remark \ref{remark on anti-commutator}, the following are easy to observe:
    \begin{enumerate}
     \item  The left commutator of $a \text{ and } b$ of degree $n$ can also be expressed as $$[a,b]_n^L=\sum\limits_{k=1}^{n-1}U_{n,k}(a,b) - \sum\limits_{k=1}^{n-1}V_{n,k}(a,b),$$ where $U_{n,k}(a,b)$ refers to all words of length $n$ in $a$ and $b$ beginning with $a$, in which $a$ appears exactly $k$ times and $V_{n,k}(a,b)$ refers to all words of length $n$ in $a$ and $b$ beginning with $b$, in which $b$ appears exactly $k$ times.
          \item  The right commutator of $a \text{ and } b$ of degree $n$ can also be expressed as $$[a,b]_n^R=\sum\limits_{k=1}^{n-1}\widetilde{U_{n,k}}(a,b) - \sum\limits_{k=1}^{n-1}\widetilde{V_{n,k}}(a,b),$$ where $\widetilde{U_{n,k}}(a,b)$ refers to all words of length $n$ in $a$ and $b$ ending with $a$, in which $a$ appears exactly $k$ times and $\widetilde{V_{n,k}}(a,b)$ refers to all words of length $n$ in $a$ and $b$ ending with $b$, in which $b$ appears exactly $k$ times.
    \item Let $a,b $ be any two elements in $S$. For $n=2,$ the left commutator of $a \text{ and }b$ of degree $2$ is exactly the commutator of$\ a \text{ and }b$ and the right-commutator of $a \text{ and }b$ of degree $2$ is exactly the commutator of$\ b \text{ and }a$ in the usual sense. \\ In our notation, we have the following:
    \begin{enumerate}
        \item $[a,b]_2^L=(a-b)(a+b)-(a^2-b^2)=ab-ba=[a,b]. $
        \item $[a,b]_2^R= (a+b)(a-b)-(a^2-b^2)=ba-ab=[b,a].$
    \end{enumerate} Trivially, we notice that $[a,b]_2^L$ is a unit in $S$ if and only if  $[a,b]_2^R$ is a unit in $S$. 
 \end{enumerate}
\end{remark}
We provide a simple example to illustrate the definitions of anti-commutator and commutators of higher degree and the above Remarks \ref{remark on anti-commutator} and \ref{REM: Remark on commutators of higher degree}. 

\begin{eg}\label{EG: Example to show definition and remark are same for higher degree anticommutators, commutators}
    Let $n=3$ and $a,b$ be two elements in a ring $S$. Then
    \begin{eqnarray*}
        W_{3,1}(a,b) = ab^2+b^2a+bab &,& W_{3,2}(a,b) = a^2b+ba^2+aba.
        \end{eqnarray*}
        \begin{eqnarray*}
        \text{So}~\langle a,b \rangle_3 &=& W_{3,1}(a,b)+W_{3,2}(a,b) \\
        &=& (ab^2+b^2a+bab) + (a^2b+ba^2+aba).
    \end{eqnarray*}
    Similarly, 
    \begin{eqnarray*}
    U_{3,1}(a,b) = ab^2&,& 
    U_{3,2}(a,b)= a^2b+aba, \\
    V_{3,1}(a,b)= ba^2&,&  
    V_{3,2}(a,b) = b^2a+bab. \\
        \text{So}~ \left[a,b\right]_n^L &=& (U_{3,1}(a,b)+U_{3,2}(a,b)) - (V_{3,1}(a,b) + V_{3,2}(a,b)) \\
        &=& (ab^2+a^2b+aba)-(ba^2+b^2a+bab).
    \end{eqnarray*}
      And for the right commutator,
    \begin{eqnarray*}
        \widetilde{U_{3,1}}(a,b) = b^2a&,& 
        \widetilde{U_{3,2}}(a,b) = ba^2+aba, \\
        \widetilde{V_{3,1}}(a,b) = a^2b&,& 
        \widetilde{V_{3,2}}(a,b) = ab^2+bab.\\
         \text{So}~\left[a,b\right]_{n}^R &=&(\widetilde{U_{3,1}}(a,b)+\widetilde{U_{3,2}}(a,b)) - (\widetilde{V_{3,1}}(a,b) + \widetilde{V_{3,2}}(a,b)) \\
        &=&(b^2a+ba^2+aba) - (a^2b+ab^2+bab).    
        \end{eqnarray*}
    
\end{eg}

Next, we prove our first main theorem that relates the invertibility of the degree $n$ anticommutator of two $n$-potent elements $a,b \in S$ to the invertibility of the left and right commutators of degree $n$ of the elements $a$ and $b$. 
\begin{theorem}\label{THM: Invertability of anticommutator vs commutator}
 Let $S$ be a ring and $a,b$ be $n$-potent elements in $S$. Then, for any $n \geq 2$,  $a-b$ and $<a,b>_n$ are units in $S$ if and only if $a+b$, $[a,b]_n^L$ and $[a,b]_n^R$ are units in $S$.  
\end{theorem}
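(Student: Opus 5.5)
The plan is to rewrite all three higher-degree expressions as products that share one common factor, and then argue using only one-sided invertibility. Set $s=a+b$, $d=a-b$ and $c=(a+b)^{n-1}-1$. Since $a^n=a$ and $b^n=b$, Definition \ref{DEF: commutators of higher degree} gives
\[
[a,b]_n^L = d(a+b)^{n-1}-d = dc, \qquad [a,b]_n^R=(a+b)^{n-1}d-d = cd .
\]
Likewise, Definition \ref{DEF: Anticommutator of Higher degree} (compare Remark \ref{remark on anti-commutator}(3)) gives $<a,b>_n = (a+b)^n-(a+b) = sc = cs$. The last equality holds because $c$ is a polynomial in $s$, so $c$ commutes with $s$.

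I will use one elementary fact throughout. If $xy$ is a unit, then $x$ has a right inverse and $y$ has a left inverse. An element that has both a left inverse and a right inverse is a unit.

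($\Rightarrow$) Assume $d$ and $<a,b>_n$ are units.
\begin{enumerate}
\item From $<a,b>_n=sc$ being a unit, $s$ has a right inverse.
\item From $<a,b>_n=cs$ being a unit, $s$ has a left inverse. Hence $s=a+b$ is a unit.
\item Then $c=s^{-1}<a,b>_n$ is a unit, being a product of units.
\item Consequently $[a,b]_n^L=dc$ and $[a,b]_n^R=cd$ are units, again as products of units.
\end{enumerate}

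($\Leftarrow$) Assume $s$, $[a,b]_n^L$ and $[a,b]_n^R$ are units.
\begin{enumerate}
\item From $dc$ being a unit, $c$ has a left inverse.
\item From $cd$ being a unit, $c$ has a right inverse. Hence $c$ is a unit.
\item Then $d=[a,b]_n^L\,c^{-1}$ is a unit.
\item Finally, $<a,b>_n=sc$ is a product of units, so it is a unit.
\end{enumerate}

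The only real work is the factorization step, in particular checking the order of the factors. In the left commutator $c$ sits on the right of $d$; in the right commutator it sits on the left. This is exactly why both commutators are needed to recover two-sided invertibility of $c$ in a noncommutative ring. For the anticommutator, the commutation $sc=cs$ does the same job.
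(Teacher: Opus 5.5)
Your proof is correct and follows essentially the same route as the paper. Both use the factorizations $<a,b>_n = sc = cs$, $[a,b]_n^L = dc$ and $[a,b]_n^R = cd$ with $c=(a+b)^{n-1}-1$, extract one-sided inverses of $c$ from the two commutators, and use the commutation of $s$ and $c$ for the anticommutator; your version is slightly more explicit where the paper simply asserts that $sc$ is a unit exactly when $s$ and $c$ are.
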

\begin{proof}
 First, given that $a \text{ and } b$ are $n$-potent elements in $S$, we have the following expressions for $<a,b>_n$, $[a,b]_n^L$ and $[a,b]_n^R$:
 \begin{align*}
     <a,b>_n&=(a+b)^n-(a+b)=(a+b)((a+b)^{n-1}-1)\\
     [a,b]_n^L&=(a-b)(a+b)^{n-1}-(a-b)=(a-b)((a+b)^{n-1}-1)\\
     [a,b]_n^R&=(a+b)^{n-1}(a-b)-(a-b)=((a+b)^{n-1}-1))(a-b).
 \end{align*}
\noindent Notice that $ <a,b>_n$ is a unit in $S$ if and only if  $(a+b)$ and $((a+b)^{n-1}-1)$ are units in $S$ as both these factors commute with each other. However, in the case of commutators, the factors $(a-b)$ and $(a+b)^{n-1}-1$ do not commute with each other. So here $[a,b]_n^L$ is a unit implies $((a+b)^{n-1}-1)$ has a left inverse and $(a-b)$ has a right inverse. Similarly, $[a,b]_n^R$ is a unit implies $((a+b)^{n-1}-1)$ has a right inverse and $(a-b)$ has a left inverse. Therefore, if both $[a,b]_n^L$ and $[a,b]_n^R$ are units in $S$ then both $((a+b)^{n-1}-1)$ and $(a-b)$ are units in $S$. So if $a+b$, $[a,b]_n^L$ and $[a,b]_n^R$ are units in $S$, then we can conclude that $(a-b)$ and $ <a,b>_n$ are units in $S$. 

Conversely, suppose that $(a-b)$ and $ <a,b>_n$ are units in $S$. 
This gives that $(a-b)$, $(a+b)^{n-1}-1$ and $a+b$ are units in $S$. Let $u$ and $v$ be the inverses of $(a-b)$ and $(a+b)^{n-1}-1$ respectively. Then, it is clear that $vu$ is the inverse of $[a,b]_n^L$ and $uv$ is the inverse of $[a,b]_n^R$.
\end{proof}
\noindent The above theorem immediately recovers the following result for idempotent elements in \cite{khurana2018idempotents}.

\begin{cor}\cite[Theorem~2.13 (3)]{khurana2018idempotents} \label{COR: Corollary for anticommutator vs commutator for n=2}
Let $S$ be a ring and $a,b$ be idempotents in $S$. If $[a,b]_2^L=[a,b]$ is a unit in $S$ then $<a,b>_2=<a,b>$ is a unit in $S$. 
\end{cor}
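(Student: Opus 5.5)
The idea is to specialize Theorem~\ref{THM: Invertability of anticommutator vs commutator} to $n=2$. An idempotent is a $2$-potent element, so $a,b$ satisfy its hypotheses. The theorem gives: if $a+b$, $[a,b]_2^L$ and $[a,b]_2^R$ are units, then $a-b$ and $<a,b>_2$ are units. So I must get these three units out of the single assumption that $[a,b]=[a,b]_2^L$ is a unit. By Remark~\ref{REM: Remark on commutators of higher degree}(3), $[a,b]_2^R=[b,a]=-[a,b]$, so it is a unit too. The only genuine work is therefore to show that $a+b$ is a unit.

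First I will extract the invertibility of $a-b$ and $a+b-1$ directly. With $a^2=a$ and $b^2=b$ one checks
\[
(a-b)(a+b-1)=ab-ba=[a,b],\qquad (a+b-1)(a-b)=ba-ab=-[a,b].
\]
Since $[a,b]$ is a unit, the first identity makes $a-b$ right invertible and the second makes it left invertible, so $x:=a-b$ is a unit. The same pair of identities makes $a+b-1$ a unit. This is the same left/right bookkeeping as in the proof of Theorem~\ref{THM: Invertability of anticommutator vs commutator}.

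The main obstacle is showing that $a+b$ is a unit. Then $<a,b>=(a+b)(a+b-1)$ is a product of commuting units, which gives the conclusion directly, or one can invoke Theorem~\ref{THM: Invertability of anticommutator vs commutator}. My plan is to exploit $x^2=(a-b)^2=a+b-ab-ba$. A direct computation shows that $x^2$ commutes with both $a$ and $b$, and that $(a+b-1)^2=1-x^2$. I will also use the identities
\[
ax+xb=x,\qquad axa=ax^2,\qquad bxb=-bx^2 .
\]
Together these give $(a+b)x^2=axa-bxb$. The first thing I would try is to factor $axa-bxb$, or a close variant, as a product of $x$, $a+b-1$ and the reflections $2a-1$, $2b-1$ (note $(2a-1)^2=1$). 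Such a factorization would exhibit $(a+b)x^2$ as a product of units, and since $x^2$ is a unit this would prove $a+b$ is a unit.

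As a sanity check, in the commutative case $a-b$ invertible forces $a+b=1$ componentwise. If no clean factorization appears, I would instead build a two-sided inverse of $a+b$ out of $x^{-1}$ and $a,b$, using that $x^2$ is central in the subring generated by $a$ and $b$.
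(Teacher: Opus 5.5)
\textbf{Gap: the invertibility of $a+b$ is never proved.} Your reduction is sound up to the step you yourself flag. The identities $(a-b)(a+b-1)=[a,b]$ and $(a+b-1)(a-b)=-[a,b]$ do make $a-b$ and $a+b-1$ units. Also $\langle a,b\rangle=(a+b)(a+b-1)$ is a product of commuting factors.

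The one nontrivial step, that $a+b$ is a unit, is not established. You propose to look for a factorization of $(a+b)x^2=axa-bxb$ into $x$, $a+b-1$, $2a-1$, $2b-1$, but you do not exhibit one. You then fall back on ``building a two-sided inverse'' without producing it. So the proof has a gap exactly where the content lies.

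The paper does not prove this step either. It quotes \cite[Theorem~2.8]{khurana2018idempotents}, that $a-b$ a unit implies $a+b$ a unit for idempotents. It then applies Theorem~\ref{THM: Invertability of anticommutator vs commutator} with $n=2$.

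\textbf{How to close it.} The missing idea is a one-line identity, one factorization away from what you already have. From your own $(a+b-1)^2=1-x^2$ you get
\[ x^2=1-(a+b-1)^2=(a+b)(2-a-b)=(2-a-b)(a+b). \]
Equivalently, $2(a+b)-(a+b)^2=a+b-ab-ba=(a-b)^2$ directly.

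Since $x=a-b$ is a unit, so is $x^2$. Hence $(2-a-b)x^{-2}$ is a right inverse and $x^{-2}(2-a-b)$ is a left inverse of $a+b$, so $a+b$ is a unit. This needs neither the identities $axa=ax^2$, $bxb=-bx^2$ nor the centrality of $x^2$.

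With that inserted, your argument is complete. It is also self-contained, slightly more so than the paper's, which outsources exactly this step to Khurana--Lam.
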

\begin{proof}
  Given that $[a,b]_2^L=[a,b]$ is a unit in $S$, the fact that  $[a,b]_2^L$ is a unit in $S$ if and only if $[a,b]_2^R$ is a unit in $S$ and the proof of \Cref{THM: Invertability of anticommutator vs commutator} gives that $a-b$ is a unit in $S$. However, this implies that $a+b$ is a unit in $S$ (see \cite[Theorem~2.8]{khurana2018idempotents}). Therefore, by \Cref{THM: Invertability of anticommutator vs commutator} we have $<a,b>_2=<a,b>$ is a unit in $S$. 
\end{proof}

For an $n$-potent element $x \in S$ we show that there are infinitely many choices of $m \in \mathbb{N}$ such that $x$ is also a $m$-potent element. We then relate the invertibility of $<a,b>_n$ with $<a,b>_m$ and the invertibility of $[a,b]_n^L$ and $[a,b]_n^R$ with the invertibility of $[a,b]_m^L$ and $[a,b]_m^R$ for $n$-potents $a$ and $b$ (see \Cref{THM: Higher vs Lower}).

\begin{lem}\label{LEM: when is x^k=x?}
    Let $S$ be a ring and $x$ be an $n$-potent element in $S$ such that the elements of the set  $ \{x,x^2,\dots,x^{n-1}\}$ are all distinct. Then, $x^{m}=x$ if and only if $m \equiv 1 \pmod{n-1}$.
\end{lem}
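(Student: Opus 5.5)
The plan is to treat the powers of $x$ as eventually periodic with period $n-1$, and then use the distinctness hypothesis to show that $n-1$ is the exact period starting from exponent $1$.

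\emph{The ``if'' direction.} From $x^n=x$ we get $x^{k+(n-1)}=x^{k-1}x^n=x^{k-1}x=x^k$ for every $k\ge 2$. For $k=1$ this reads $x^{1+(n-1)}=x^n=x$. By induction on $j\ge 0$, it follows that $x^{1+j(n-1)}=x$: the step is $x^{1+(j+1)(n-1)}=x^{1+j(n-1)}x^{n-1}=x\cdot x^{n-1}=x^n=x$. So every $m\equiv 1 \pmod{n-1}$ with $m\ge 1$ satisfies $x^m=x$.

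\emph{The ``only if'' direction.} Suppose $x^m=x$ for some $m\ge 1$. Write $m-1=q(n-1)+r$ with $0\le r\le n-2$, so that $m=1+r+q(n-1)$. By the periodicity just established, applied to the exponent $1+r\ge 1$, we have
\[
x^m=x^{1+r+q(n-1)}=x^{1+r}.
\]
Hence $x^{1+r}=x$ with $1\le 1+r\le n-1$. Both $x$ and $x^{1+r}$ lie in the set $\{x,x^2,\dots,x^{n-1}\}$, whose elements are assumed pairwise distinct, so $1+r=1$. Thus $r=0$, which means $m\equiv 1\pmod{n-1}$.

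\emph{Main obstacle.} The only delicate point is the bookkeeping in the reduction $x^{1+r+q(n-1)}=x^{1+r}$, namely that the period-$(n-1)$ shift is valid from exponent $1$ onward and not only from exponent $2$. This holds because $x^{1}\cdot x^{n-1}=x^n=x$. One should also note the degenerate case $n=2$: there the modulus is $1$, so every $m\ge 1$ qualifies, and indeed $x^m=x$ for every idempotent $x$, consistent with the statement. Throughout we assume $m\ge 1$.
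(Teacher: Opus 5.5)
Your proof is correct and follows the same route as the paper: the powers of $x$ are periodic with period $n-1$ (from $x\cdot x^{n-1}=x$), and distinctness of $x,\dots,x^{n-1}$ then forces the residue. Dividing $m-1$ rather than $m$ by $n-1$ is a slightly cleaner choice, because it places the reduced exponent in $\{1,\dots,n-1\}$. This covers uniformly the residue $r=0$ and the case $n=2$, which the paper's stated range $1\le r<n-1$ glosses over.
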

\begin{proof}
Firstly, observe that $x^n=x$ implies $x^{n-1}\cdot x=x^n=x$. Hence, for any $k \geq 1$, $x^{n-1}x^k=x^k$ and $x^{k({n-1})}=x^{n-1}$. Also, since $x,x^2,x^3\dots,x^{n-1}$ are all distinct elements, therefore the sequence $\{x^i\}_{i \geq 1}$ is periodic with period $n-1$. Now, suppose that $x^m=x$ then by division algorithm, we have: $$m=q(n-1)+r \text{ where } 1 \leq r<n-1$$
Therefore, $x^{m}=x^{q(n-1)+r}=x^{r} \text{ where } 1 \leq r<n-1.$ However, we have $x^m=x$ and $x,x^2,\dots,x^{n-1}$ are distinct elements. Therefore, $r=1$ and hence $m \equiv 1 \pmod{n-1}$. Conversely, suppose that $m \equiv 1 \pmod{n-1}$ then, $m -1=k(n-1)$ for some $k$. Therefore, $m=1+k(n-1)$ and hence $x^m=x^{1+k(n-1)}=x\cdot(x^{n-1})^k=x.$

\end{proof}
\begin{theorem}\label{THM: Higher vs Lower}
    Let $S$ be a ring and $a,b$ be two $n$-potent elements in $S$ such that $<a,b>_n$ , $[a,b]_n^L$ and $[a,b]_n^R$ are units in $S$. Then the following results hold:
    \begin{enumerate}
        \item For every $m >n$ such that $a^m=a$ and $b^m=b$, the anticommutator of degree $m$ given by $<a,b>_m$ is a unit in $S$ if and only if $\displaystyle\sum_{k=0}^{m-2}(a+b)^k$ is a unit in $S$.
        \item For every $m >n$ such that $a^m=a$ and $b^m=b$, the commutators of degree $m$ given by $[a,b]_m^L$ and $[a,b]_m^R$ are units in $S$ if and only if $\displaystyle\sum_{k=0}^{m-2}(a+b)^k$ is a unit in $S$. 
    \end{enumerate}
    
\end{theorem}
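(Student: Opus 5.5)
Set $s = a+b$. The plan is to reuse the factorizations from the proof of \Cref{THM: Invertability of anticommutator vs commutator}, now at degree $m$. Since $a^m=a$ and $b^m=b$, the same computation gives
\begin{align*}
<a,b>_m &= s\,(s^{m-1}-1) = s\,(s-1)\sum_{k=0}^{m-2}s^k,\\
[a,b]_m^L &= (a-b)(s^{m-1}-1) = (a-b)(s-1)\sum_{k=0}^{m-2}s^k,\\
[a,b]_m^R &= (s^{m-1}-1)(a-b) = (s-1)\Big(\sum_{k=0}^{m-2}s^k\Big)(a-b).
\end{align*}
The factors $s$, $s-1$ and $\sum_{k=0}^{m-2}s^k$ are polynomials in $s$, so they commute with one another.

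Next I extract the unit information at degree $n$. By hypothesis $<a,b>_n=s(s-1)\sum_{k=0}^{n-2}s^k$ is a unit. Its factors commute, so $s$, $s-1$ and $\sum_{k=0}^{n-2}s^k$ are all units. In particular $s$ and $s-1$ are units; this is the key input.

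From the hypotheses on $[a,b]_n^L$ and $[a,b]_n^R$, the argument in the proof of \Cref{THM: Invertability of anticommutator vs commutator} shows that $a-b$ has both a left and a right inverse. Hence $a-b$ is a unit.

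For (1): $<a,b>_m$ is a product of pairwise commuting elements, two of which ($s$ and $s-1$) are units. So it is a unit if and only if $\sum_{k=0}^{m-2}s^k$ is a unit. The direction "unit product implies unit factor" uses that commuting factors of a unit are units: if $xy=yx$ is a unit then $x$ has a left and a right inverse.

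For (2): if $\sum_{k=0}^{m-2}s^k$ is a unit, then $s^{m-1}-1$ is a unit. Then $[a,b]_m^L$ and $[a,b]_m^R$ are products of the units $a-b$ and $s^{m-1}-1$, hence units. Conversely, if $[a,b]_m^L$ is a unit, then $s^{m-1}-1 = (a-b)^{-1}[a,b]_m^L$ is a unit, because $a-b$ is already a unit. Therefore $\sum_{k=0}^{m-2}s^k=(s-1)^{-1}(s^{m-1}-1)$ is a unit.

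The only delicate step is handling the noncommuting factor $a-b$. This is resolved by knowing that $a-b$ is already a two-sided unit from the degree-$n$ hypotheses. Note also that $m>n$ and \Cref{LEM: when is x^k=x?} serve only to guarantee that such $m$ exist; the argument uses just $a^m=a$ and $b^m=b$.
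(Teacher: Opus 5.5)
Your proof is correct and follows the paper's approach: factor $<a,b>_m=s(s-1)\sum_{k=0}^{m-2}s^k$ with $s=a+b$, and use that $s$ and $s-1$ are units because they are commuting factors of the unit $<a,b>_n$. You also work out the commutator case, which the paper only calls ``similar'', correctly using that $a-b$ is a two-sided unit by the degree-$n$ commutator hypotheses; your factor $s-1$ also corrects the paper's misprint $(a+b+1)$ in its factorization of $<a,b>_m$.
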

\begin{proof}
We prove the given statement for the case of anticommutators, as the proof for the case of commutators follows using similar arguments. Given $m>n \geq 2,$ we have the following expressions for $<a,b>_n$ and $<a,b>_m$,
$$<a,b>_n=(a+b)((a+b)^{n-1}-1) \text{ and }<a,b>_m=(a+b)((a+b)^{m-1}-1).$$
Now, $((a+b)^{n-1}-1)=(a+b-1)\left(\displaystyle\sum_{k=0}^{n-2}(a+b)^k\right)$, and hence if $<a,b>_n$ is a unit in $S$, then $(a+b),(a+b-1) \text{ and }\left(\displaystyle\sum_{k=0}^{n-2}(a+b)^k\right)$ are units in $S$. Using similar arguments, for $m>n$, the expression for $<a,b>_m$ is given by:
$$<a,b>_m=(a+b)(a+b+1)\left(\displaystyle\sum_{k=0}^{m-2}(a+b)^k\right).$$
It is clear that if $<a,b>_m$ is a unit in $S$ then $\left(\displaystyle\sum_{k=0}^{m-2}(a+b)^k\right)$ is also a unit in $S$ as it appears as a factor of $<a,b>_m$ that commutes with the other factors. Conversely, when $<a,b>_n$ and  $\left(\displaystyle\sum_{k=0}^{m-2}(a+b)^k\right)$ are units in $S$. Then, $(a+b), (a+b-1) \text{ and }\left(\displaystyle\sum_{k=0}^{m-2}(a+b)^k\right)$ are all units in $S$ and hence $<a,b>_m$ is a unit in $S$. 
\end{proof}
The following is an immediate corollary of \Cref{THM: Higher vs Lower} which focuses on the case of idempotents. 
\begin{cor}\label{COR: Higher vs Lower for Idempotents}
   Let $S$ be a ring and $a,b$ be two idempotent elements of $S$ such that $<a,b>_2$ and $[a,b]^L_2 \text{ or } [a,b]_2^R$ are units in $S$. Then the following results hold:
    \begin{enumerate}
        \item For every $m >2$, $<a,b>_m$ is a unit in $S$ if and only if $\displaystyle\sum_{k=0}^{m-2}(a+b)^k$ is a unit in $S$.
        \item For every $m >2$ , the commutators of degree $m$ given by $[a,b]_m^L$ and $[a,b]_m^R$ are units in $S$ if and only if $\displaystyle\sum_{k=0}^{m-2}(a+b)^k$ is a unit in $S$. 
    \end{enumerate}   
\end{cor}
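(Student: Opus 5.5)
The plan is to reduce the statement directly to \Cref{THM: Higher vs Lower} with $n=2$, after checking that its hypotheses hold. There are three things to verify.

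First, idempotents are $2$-potent elements. Moreover $a^m=a$ and $b^m=b$ hold for \emph{every} $m\ge 2$, by induction: $a^{m}=a^{m-1}a=a\cdot a=a$. So the side condition ``$a^m=a$ and $b^m=b$'' in \Cref{THM: Higher vs Lower} is automatic for every $m>2$. This is consistent with \Cref{LEM: when is x^k=x?}, since $m\equiv 1 \pmod 1$ always holds.

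Second, \Cref{THM: Higher vs Lower} requires both $[a,b]_2^L$ and $[a,b]_2^R$ to be units, whereas here only one of them is assumed to be a unit. By Remark \ref{REM: Remark on commutators of higher degree}(3), $[a,b]_2^L=ab-ba=-[a,b]_2^R$. Hence one is a unit if and only if the other is. So the hypothesis ``$<a,b>_2$ and $[a,b]_2^L$ or $[a,b]_2^R$ are units'' upgrades to ``$<a,b>_2$, $[a,b]_2^L$ and $[a,b]_2^R$ are all units''.

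With these two observations, parts (1) and (2) follow verbatim from \Cref{THM: Higher vs Lower} applied with $n=2$ and arbitrary $m>2$.

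To be safe about the commutator part, which the proof of \Cref{THM: Higher vs Lower} only sketches, I would also record the direct factorizations valid for idempotents:
\[
<a,b>_m=(a+b)(a+b-1)\sum_{k=0}^{m-2}(a+b)^k,\qquad [a,b]_m^L=(a-b)(a+b-1)\sum_{k=0}^{m-2}(a+b)^k,
\]
and symmetrically $[a,b]_m^R=(a+b-1)\bigl(\sum_{k=0}^{m-2}(a+b)^k\bigr)(a-b)$.

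For the anticommutator, the three factors in the first product commute. Since $<a,b>_2=(a+b)(a+b-1)$ is a unit, $<a,b>_m$ is a unit exactly when the sum $\sum_{k=0}^{m-2}(a+b)^k$ is.

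For the commutators, the proof of \Cref{THM: Invertability of anticommutator vs commutator} applied to $[a,b]_2^L$ and $[a,b]_2^R$ shows that $a-b$ and $a+b-1$ are units. Then $[a,b]_m^L$ and $[a,b]_m^R$ are products of units precisely when the sum is a unit. Conversely, suppose $[a,b]_m^L$ and $[a,b]_m^R$ are units. Cancelling the units $a-b$ and $a+b-1$ gives a one-sided inverse of the sum on each side. Hence the sum is a unit.

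The only mildly delicate point is this one-sided-inverse argument in the noncommutative setting, and it mirrors exactly the reasoning already given in \Cref{THM: Invertability of anticommutator vs commutator}.
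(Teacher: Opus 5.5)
Your proposal is correct and takes essentially the paper's approach. You reduce to \Cref{THM: Higher vs Lower} with $n=2$, after checking that $a^m=a$ and $b^m=b$ for all $m>2$ and that $[a,b]_2^L=-[a,b]_2^R$ makes the two commutator hypotheses equivalent. Your explicit factorizations, such as $[a,b]_m^L=(a-b)(a+b-1)\sum_{k=0}^{m-2}(a+b)^k$, usefully verify the commutator case, which the paper's proof of \Cref{THM: Higher vs Lower} only sketches. They also correct that proof's factor $a+b+1$ to $a+b-1$.
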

\begin{proof}
Observe that for idempotent elements $a \text{ and }b \in S$, we have $a^m=a$ and $b^m=b$ for all $m>2$ (see Lemma \ref{LEM: when is x^k=x?}). Also due to Remark \ref{remark on anti-commutator}, we have that the invertibility of $[a,b]^L_2 \text{ or } [a,b]_2^R$ implies the invertibility of the other. The rest is a direct consequence of \Cref{THM: Higher vs Lower} by setting $n=2$. 
\end{proof}

Next, we focus on the case of rings with only trivial idempotents. We show that for a ring $S$ with only trivial idempotents, any $n$-potent in $S$ is either $0$ or a unit (see \Cref{LEM: n-potents in rings with only trivial idempotents}). Consequently, any tripotent in $S$ cannot be genuine (in the sense of \cite{cualuguareanu2018tripotents}). We provide necessary definitions and context wherever necessary. 
\begin{definition}\label{DEF: Ring with only trivial Idempotents}
    A ring $S$ is said to have only trivial idempotents if the only idempotents in $S$ are $0$ and $1$. 
\end{definition}
\begin{lem}\label{LEM: n-potents in rings with only trivial idempotents}
    Let $S$ be a ring with only trivial idempotents. Then, any $n$-potent in $S$ is either $0$ or a unit. 
\end{lem}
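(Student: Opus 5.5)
The plan is to produce an idempotent from $x$ and let the hypothesis on $S$ force it to be trivial. Let $x\in S$ with $x^n=x$ for some $n\geq 2$, and set $e:=x^{n-1}$.

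First we check that $e$ is idempotent. Since $x^n=x$, we get $x^{n-1}\cdot x=x$, and by induction $x^{n-1}x^k=x^k$ for all $k\geq 1$; this is the observation in the proof of Lemma~\ref{LEM: when is x^k=x?}. Taking $k=n-1$ gives
\[
e^2=x^{n-1}x^{n-1}=x^{n-1}=e .
\]
For $n=2$ we simply have $e=x$, so the argument below covers that case as well.

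Since $S$ has only trivial idempotents, $e\in\{0,1\}$, and we treat the two cases separately.

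If $e=0$, then $x=x^n=x\cdot x^{n-1}=xe=0$.

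If $e=1$, then $x\cdot x^{n-2}=x^{n-1}=1$ and $x^{n-2}\cdot x=x^{n-1}=1$, with the convention $x^0=1$ when $n=2$. So $x^{n-2}$ is a two-sided inverse of $x$, and $x$ is a unit. Note that $x$ commutes with its own powers, so no commutativity of $S$ is needed.

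There is no serious obstacle here. The only points needing care are recognizing that $x^{n-1}$ (and not $x$ itself) is the right idempotent, and handling the degenerate case $n=2$ in the convention for $x^{n-2}$.
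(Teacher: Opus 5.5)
Your proposal is correct and follows the paper's proof: show that $x^{n-1}$ is idempotent, then split into the case $x^{n-1}=0$ (which forces $x=x\cdot x^{n-1}=0$) and the case $x^{n-1}=1$ (which makes $x$ a unit). The differences are cosmetic: the paper handles $n=2$ separately and shows idempotence via $x^{2n-2}=x^n\cdot x^{n-2}$, whereas you cover $n=2$ with the convention $x^0=1$ and write down the two-sided inverse $x^{n-2}$ explicitly.
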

\begin{proof}
Let $x$ be an $n$-potent element in $S$. If $n=2
$ then $x$ is an idempotent and hence $x=0 \text{ or } x=1$. Suppose $n \geq 3$ then consider the following 
$$(x^{n-1})^2=x^{2n-2}=x^n\cdot x^{n-2}=x^{n-1}.$$
Therefore, $x^{n-1}$ is an idempotent in $S$. Since $S$ has only trivial idempotents, $x^{n-1}=0$ or $x^{n-1}=1$. If $x^{n-1}=0$ then $x^n=x=0$, and if $x^{n-1}=1$ then $x$ is a unit in $S$.
\end{proof}
\noindent Genuine tripotents were first introduced in \cite[Section~1]{cualuguareanu2018tripotents}, we include the definition for the sake of completeness. 
\begin{definition}\label{DEF: Genuine Tripotents}
    An element $r$ of a ring $S$ is called a genuine tripotent if the  following conditions hold:
    \begin{enumerate}
        \item $r^3=r$
        \item $r$ is not an idempotent or negative of an idempotent
        \item $r^2 \neq 1$ i.e. $r$ is not an order $2$ unit. 
        \end{enumerate}
       A ring is said to have no genuine tripotents if the set of genuine tripotents is an empty set.\end{definition} 

\begin{cor}\cite[Remarks]{cualuguareanu2018tripotents}\label{Cor: Ring with only trivial idempotents has no genuine tripotents}
    Let $S$ be a ring with only trivial idempotents. Then, $S$ has no genuine tripotents. 
\end{cor}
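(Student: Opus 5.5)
The plan is to reduce the corollary to \Cref{LEM: n-potents in rings with only trivial idempotents} applied with $n=3$. Let $r \in S$ be a tripotent, so $r^3=r$. By that lemma, either $r=0$ or $r$ is a unit. The aim is to show that in either case one of conditions (2) or (3) of \Cref{DEF: Genuine Tripotents} fails, so $r$ cannot be genuine.

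First, if $r=0$, then $r$ is an idempotent, so condition (2) fails. Second, if $r$ is a unit, I will cancel $r$ in $r^3=r$ by multiplying on the right by $r^{-1}$. This gives $r^2=1$, so condition (3) fails. In both cases $r$ is not a genuine tripotent. Since $r$ was an arbitrary tripotent, the set of genuine tripotents in $S$ is empty.

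For completeness, the unit case can also be obtained without citing the lemma's conclusion directly. The lemma's proof shows that $r^{2}$ is an idempotent, and trivial idempotents force $r^2\in\{0,1\}$. If $r^2=0$, then $r=r^3=0$; otherwise $r^2=1$. Either way we land in one of the two cases above, so I would simply cite the lemma.

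I do not expect a real obstacle here. The only point needing care is that ``unit'' really yields $r^2=1$, which follows from the cancellation step.
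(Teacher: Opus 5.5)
Your proposal is correct and follows essentially the same route as the paper: apply \Cref{LEM: n-potents in rings with only trivial idempotents} with $n=3$, so that a tripotent is either $0$ (hence idempotent, violating condition (2)) or a unit (hence $r^2=1$ by cancellation, violating condition (3)). Your alternative via the idempotent $r^2\in\{0,1\}$ is also valid; it simply inlines the lemma's own proof.
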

\begin{proof}
 Let $x \in S$ be a tripotent element. Since $S$ has only trivial idempotents, by Lemma \ref{LEM: n-potents in rings with only trivial idempotents}, $x$ is either $0$ or a unit. If $x=0$ then $x$ is not a genuine tripotent. Also, $x$ is a tripotent element i.e. $x^3=x$. Further if $x$ is a unit, we get $x^2=1$. In any case $x$ is not a genuine tipotent. Therefore, $S$ has no genuine tripotents. 
\end{proof}

Using Lemma \ref{LEM: n-potents in rings with only trivial idempotents}, we now study the invertibility of 
$<a,b>_n$, $[a,b]_n^L$ and $[a,b]_n^R$ over rings with only trivial idempotents. We provide equivalent conditions for the invertibility of 
$<a,b>_n$, $[a,b]_n^L$ and $[a,b]_n^R$ in terms of the invertibility of simpler ring elements (see \Cref{THM: Commutators and Anticommutators in Rings with only trivial idempotents}). In order to prove this result, we require the following lemma. 
\begin{lem}\label{LEM: Commutators and Anticommutators that include Unity}
    Let $S$ be a ring. 
\begin{enumerate}
    \item If $x \neq 1$ is a tripotent element in $S$, then $<1,x>_3$ is never a unit in $S$. 
    \item If $n\ge 4$ and $x$ is an $n$-potent element in $S$ such that $x^{n-1} \neq 1$, then $<1,x>_n$ is never a unit in $S$.
    \item In general, for $n \ge 4$, if $x$ is an $n$-potent element of $S$, then $<1,x>_n$ is a unit if and only if $x$, $(1+x)$ and $t$ are units in $S$, where $t = \sum\limits_{k=0}^{n-2}(1+x)^{k}$.
\end{enumerate}
\end{lem}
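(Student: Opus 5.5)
The plan is to reduce everything to a single commuting factorization of $<1,x>_n$ and then read off all three parts from it. Since $1^n=1$, both $1$ and $x$ are $n$-potent, so part (3) of Remark \ref{remark on anti-commutator} applies to the pair $1,x$. Using $x^n=x$ directly, this gives
\begin{align*}
<1,x>_n=(1+x)^n-1-x=(1+x)\bigl((1+x)^{n-1}-1\bigr)=(1+x)\,x\,\sum_{k=0}^{n-2}(1+x)^k ,
\end{align*}
since $(1+x)-1=x$. All three factors are polynomials in $x$, so they commute pairwise. For commuting elements, the product is a unit if and only if each factor is a unit. The ``if'' direction is trivial. For ``only if'', suppose $pq$ is a unit with $w=(pq)^{-1}$. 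Then $p(qw)=1$, and also $(wq)p=w(qp)=w(pq)=1$, so $p$ has both a right and a left inverse and is a unit; symmetrically for $q$. This is exactly the argument already used in the proof of \Cref{THM: Invertability of anticommutator vs commutator}. It proves part (3) immediately, with $t=\sum_{k=0}^{n-2}(1+x)^k$; in fact this holds for every $n\ge 2$.

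For part (2), with $n\ge 4$ and $x^{n-1}\neq 1$: suppose $<1,x>_n$ were a unit. Then, by the factorization, $x$ is a unit. Cancelling $x$ in $x\cdot x^{n-1}=x^n=x$ gives $x^{n-1}=1$, a contradiction. (Alternatively, one can argue via the idempotent $x^{n-1}$, as in the proof of \Cref{LEM: n-potents in rings with only trivial idempotents}: a unit idempotent equals $1$.)

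For part (1), with $n=3$, the factorization reads $<1,x>_3=(1+x)\,x\,(2+x)$. If this is a unit, then both $x$ and $1+x$ are units. The same cancellation as in part (2) gives $x^2=1$, so
\begin{align*}
(1+x)(1-x)=1-x^2=0 .
\end{align*}
Multiplying on the left by $(1+x)^{-1}$ yields $1-x=0$, i.e.\ $x=1$, contradicting the hypothesis $x\neq 1$.

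The only step needing care is part (1). There, invertibility of $x$ alone does not suffice, since an order-$2$ unit $x\neq 1$ is allowed. One must also use the invertibility of the factor $1+x$ to kill $1-x$. Everything else is a direct consequence of the commuting factorization.
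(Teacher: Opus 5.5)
Your proof is correct and takes essentially the same route as the paper. Both use the commuting factorization $<1,x>_n=(1+x)\,x\,t$. Part (2) then follows by cancelling the unit $x$ in $x(x^{n-1}-1)=0$, and part (1) by using the invertibility of $x$ and $1+x$ to force $x-1=0$ in $x(x-1)(x+1)=0$. Your write-up is in fact a bit more careful than the paper's, since you justify why a product of commuting factors is a unit only if each factor is.
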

\begin{proof}
    \begin{enumerate}
        \item For $n=3$, we have the following expression for $<1,x>_3$,
        \begin{align*}
           <1,x>_3  = (1+x)((1+x)^2-1) = (1+x)(x)(2+x).
        \end{align*}
         We also have $x^3 = x$ which implies that
            $x(x^2-1) = 0$, and so $x(x-1)(x+1) = 0.$ Therefore, $<1,x>_3$ is a unit implies $(1+x)$, $x$ and $(2+x)$ are units, and this forces $(x-1)$ to be zero. This is a contradiction to the hypothesis. 
             \item For $n \geq 4$,  we have the following expression for $<1,x>_n$,
             \begin{align*}
              <1,x>_n & = (1+x)((1+x)^{n-1}-1)   \\
              & = (1+x)(x)(t),~~~\text{where $t = \sum\limits_{k=0}^{n-2}(1+x)^k$.}
             \end{align*}
             So if $<1,x>_n$ is a unit, $x$ has to be a unit. As $x$ is an $n$-potent, we also have $x(x^{n-1}-1)=0$. This implies $x^{n-1} = 1,$ contradicting the hypothesis again. So $<1,x>_n$ can never be a unit if $x^{n-1} \neq 1.$
             \item From the previous subdivision, it is clear that $<1,x>_n$ is a unit if and only if $x$, $(1+x)$ and $t$ are units.
    \end{enumerate}
\end{proof}

\begin{theorem}\label{THM: Commutators and Anticommutators in Rings with only trivial idempotents}
   Let $S$ be a ring with only trivial idempotents and let $a$ and $b$ be $n$-potent elements in $S$. Then the following results hold:
   \begin{enumerate}
       \item If either $a=0$ or $b=0$, then $<a,b>_n=0$ and is never a unit in $S$.
       \item If $a=b=1$ then $<a,b>_n$ is a unit if and only if $2^n-2$ is a unit in $S$. 
       \item If $a=b=-1$ then $<a,b>_n$ is a unit if and only if $2-2^n$ is a unit if $n$ is odd and $2^n+2$ is a unit if $n$ is even. 
       \item If $a=-1 \text{ and }b=1$ or  $a=1 \text{ and }b=-1$ then $<a,b>_n=0$ is never a unit in $S$.
       \item If $a$ and $b$ are units such that $a\neq 1,-1$ and $b \neq 1,-1$ then $<a,b>_n$ is a unit if and only if $(a+b)$ and $1+<a,b>_{n-1}$ are units in $S$. 
   \end{enumerate}
\end{theorem}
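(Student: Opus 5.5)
The plan is to reduce everything to the two identities already used in the paper. For $n$-potent $a,b$ we have
\[
<a,b>_n=(a+b)^n-(a+b)=(a+b)\bigl((a+b)^{n-1}-1\bigr),
\]
and \Cref{LEM: n-potents in rings with only trivial idempotents} says that every $n$-potent in $S$ is either $0$ or a unit. Items (1)--(4) are direct substitutions into Definition \ref{DEF: Anticommutator of Higher degree}. Item (5) uses the factorization together with the fact, taken from the proof of that lemma, that $x^{n-1}$ is an idempotent.

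For (1), if $a=0$ then $<0,b>_n=b^n-0-b^n=0$, and symmetrically if $b=0$. Since $0$ is not a unit in a nonzero ring, $<a,b>_n$ is never a unit. For (2), with $a=b=1$ we get $<1,1>_n=2^n-1-1=2^n-2$. For (4), with $a=1$ and $b=-1$ (or the reverse) we get $(a+b)^n=0$, so $<a,b>_n=-1-(-1)^n$. Here I will use that $-1$ is assumed $n$-potent, so $(-1)^n=-1$, and hence $<a,b>_n=0$.

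For (3), with $a=b=-1$ we get $<a,b>_n=(-2)^n-2(-1)^n$. If $n$ is odd this equals $-2^n+2=2-2^n$, as claimed. If $n$ is even, a direct computation gives $2^n-2$ rather than $2^n+2$, so the case needs care. I will argue that $n$-potency of $-1$ with $n$ even gives $1=(-1)^n=-1$, that is $2=0$ in $S$. Then both $2^n-2$ and $2^n+2$ equal $0$, so neither $<a,b>_n$ nor $2^n+2$ is a unit, and the stated equivalence holds vacuously. I expect this to be the only delicate point: the stated formula for even $n$ is literally correct only because of this characteristic-$2$ degeneracy.

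For (5), let $a,b$ be units. By the proof of \Cref{LEM: n-potents in rings with only trivial idempotents}, $a^{n-1}$ and $b^{n-1}$ are idempotents, hence lie in $\{0,1\}$; being units, both equal $1$. Therefore
\[
1+<a,b>_{n-1}=1+(a+b)^{n-1}-a^{n-1}-b^{n-1}=(a+b)^{n-1}-1.
\]
Substituting, $<a,b>_n=(a+b)\bigl(1+<a,b>_{n-1}\bigr)$, a product of two commuting factors, since both are polynomials in $a+b$. A product of commuting elements is a unit if and only if each factor is a unit; this is the same argument as in the proof of \Cref{THM: Invertability of anticommutator vs commutator}. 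The hypothesis $a,b\neq\pm1$ is not needed for this step; it only separates case (5) from the earlier cases.
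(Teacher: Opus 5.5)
Your proof is correct and follows essentially the same route as the paper. Items (1)--(4) are direct substitution. For (5), both you and the paper use the factorization $<a,b>_n=(a+b)\bigl((a+b)^{n-1}-1\bigr)$ together with $a^{n-1}=b^{n-1}=1$ to obtain $<a,b>_n=(a+b)\bigl(1+<a,b>_{n-1}\bigr)$.

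The only variation is in (3). The paper reads off $2^n+2$ for even $n$ directly from the $n$-potent identity $(a+b)^n-(a+b)=(-2)^n+2$. You instead compute $2^n-2$ from the definition and reconcile the two values via $2=0$, which is equally valid.
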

\begin{proof}
    (1) and (4) are trivial due to the fact that each term in $<a,b>_n$ is zero for (1) and $a+b=0$ for (4). For any $n$-potents $a $ and $b$ we have:
    $$<a,b>_n=(a+b)((a+b)^{n-1}-1).$$
    In particular if $a=b=1$ then $<a,b>_n=2^n-2$. Hence, the result (2) is obtained. Further if $a=b=-1$ and $n$ is odd then $<a,b>_n=2-2^n$ and if $n$ is even then $<a,b>_n=2+2^n$. Therefore the result (3) is obtained. Finally for (5), we observe that if $a$ and $b$ are both units then $a^{n-1}=b^{n-1}=1$. Using this fact, we get the following expressions:
    \begin{align*}
        <a,b>_n=&(a+b)((a+b)^{n-1}-1)\\
        =& (a+b)(a^{n-1}+b^{n-1}+<a,b>_{n-1}-1)\\
        =&(a+b)(<a,b>_{n-1}+1)        
    \end{align*}
Therefore, $<a,b>_n$ is a unit if and only if $(a+b)$ and $1+<a,b>_{n-1}$ are units in $S$.
\end{proof}
Next, we extend the results of \Cref{THM: Commutators and Anticommutators in Rings with only trivial idempotents} to matrix rings of size $2$ over rings with only trivial idempotents but we discuss the case of tripotent elements.  

\begin{theorem}\label{THM: Commutators and Anticommutators in matrix rings over Rings with only trivial idempotents}
    Let $S$ be a commutative ring with only trivial idempotents. Then, for any $n$-potent element $A\in M_2(S)$ for any $m \geq 1$ we have that $\mathrm{det}(A)$ is either $0$ or a unit. Furthermore, the following statements hold regarding the invertibility of $<A,B>_3$, $[A,B]_3^R$ and $[A,B]_3^L$ where $A$ and $B$ are tripotents in $M_2(S)$:
    \begin{enumerate}
        \item If $\mathrm{det(A)}=\mathrm{det}(B)=0$ then $<A,B>_3$ is a unit in $M_2(S)$ if and only if $[A,B]_3^R$ and $[A,B]_3^L$ are units in $M_2(S).$\item If $\mathrm{det}(A) $  and $\mathrm{det}(A)$ are units in $S$ then one of the following hold:
        \begin{enumerate}
              \item If $[A,B]_2^L$ is a unit in $M_2(S)$  then $<A,B>_3$ is a unit in $M_2(S)$ if and only if $[A,B]_3^R$ and $[A,B]_3^L$ are units in $M_2(S).$
        \item If $[A,B]_2^L$  is not a unit in $M_2(S)$  then either $<A,B>_3$ or  $[A,B]_3^R$ and $[A,B]_3^L$ are not units in $M_2(S).$
        \end{enumerate}
        \item If $\mathrm{det}(A)=0$ , $\mathrm{det}(B)$ is a unit in $S$ then one of the following hold: \begin{enumerate}
            \item If $A^2-I-[A,B]_2^L$, $A^2-I-[A,B]_2^R$ are units in $M_2(S)$  then $<A,B>_3$ is a unit in $M_2(S)$ if and only if $[A,B]_3^R$ and $[A,B]_3^L$ are units in $M_2(S).$
        \item If any of $A^2-I-[A,B]_2^L$ or $A^2-I-[A,B]_2^R$ is not a unit in $M_2(S)$  then either $<A,B>_3$ or  $[A,B]_3^R$ and $[A,B]_3^L$ are not units in $M_2(S).$
        
        \end{enumerate}
  \item If $\mathrm{det}(B)=0$ , $\mathrm{det}(A)$ is a unit in $S$ then one of the following hold: \begin{enumerate}
            \item If $-B^2+I-[A,B]_2^L$, $-B^2+I-[A,B]_2^R$ are units in $M_2(S)$  then $<A,B>_3$ is a unit in $M_2(S)$ if and only if $[A,B]_3^R$ and $[A,B]_3^L$ are units in $M_2(S).$
        \item If any of $-B^3+I-[A,B]_2^L$ or $-B^3+I-[A,B]_2^R$ is not a unit in $M_2(S)$  then either $<A,B>_3$ or  $[A,B]_3^R$ and $[A,B]_3^L$ are not units in $M_2(S).$
        
        \end{enumerate}
        
    \end{enumerate}
\end{theorem}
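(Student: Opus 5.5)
The plan is to reduce everything to determinants, using that $S$ is commutative: a matrix in $M_2(S)$ is a unit if and only if its determinant is a unit in $S$, and $\det$ is multiplicative.

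\emph{Step 1 (determinants of $n$-potents).} If $A^n=A$, then $\det(A)^n=\det(A^n)=\det(A)$. So $\det(A)$ is an $n$-potent of $S$, and Lemma \ref{LEM: n-potents in rings with only trivial idempotents} shows that it is $0$ or a unit.

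\emph{Step 2 (factorization).} For tripotents $A,B$, exactly as in the proof of Theorem \ref{THM: Invertability of anticommutator vs commutator}, we have
\begin{align*}
<A,B>_3&=(A+B)\bigl((A+B)^2-I\bigr),\\
[A,B]_3^L&=(A-B)\bigl((A+B)^2-I\bigr),\\
[A,B]_3^R&=\bigl((A+B)^2-I\bigr)(A-B).
\end{align*}
Taking determinants gives the following. The element $<A,B>_3$ is a unit if and only if $\det(A+B)$ and $\det((A+B)^2-I)$ are units. Each of $[A,B]_3^L$ and $[A,B]_3^R$ is a unit if and only if $\det(A-B)$ and $\det((A+B)^2-I)$ are units. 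So in every case the question is how $\det(A+B)$ compares with $\det(A-B)$.

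\emph{Step 3 (case analysis).}

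Case (1): $\det A=\det B=0$. For $2\times 2$ matrices,
\[
\det(A\pm B)=\det A+\det B\pm\bigl(\mathrm{tr}A\,\mathrm{tr}B-\mathrm{tr}(AB)\bigr).
\]
Here this gives $\det(A+B)=-\det(A-B)$, so one is a unit if and only if the other is, and the equivalence follows.

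Case (2): both determinants are units. Then $A,B$ are units, and $A^3=A$ forces $A^2=B^2=I$. Hence
\[
(A+B)(A-B)=BA-AB=-[A,B]_2^L,
\]
so $\det(A+B)\det(A-B)=\det([A,B]_2^L)$.
\begin{enumerate}
\item[(a)] If $[A,B]_2^L$ is a unit, both $\det(A\pm B)$ are units. Then each side of the equivalence holds exactly when $(A+B)^2-I$ is a unit.
\item[(b)] If $[A,B]_2^L$ is not a unit, one of $A+B$, $A-B$ is not a unit, which kills one of the two sides.
\end{enumerate}

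Case (3): $\det A=0$ and $\det B$ is a unit, so $B^2=I$. Compute
\begin{align*}
(A+B)(A-B)&=A^2-I-[A,B]_2^L,\\
(A-B)(A+B)&=A^2-I-[A,B]_2^R.
\end{align*}
Either product being a unit makes both $\det(A\pm B)$ units, since $S$ is commutative. The argument of Case (2) then gives (a) and (b).

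Case (4): symmetric to Case (3), with $(A+B)(A-B)=I-B^2-[A,B]_2^L$ and similarly for the other order. The hypotheses in the statement appear to contain typos: "$\det(A)$ and $\det(A)$" in (2) should read $\det(A)$ and $\det(B)$, and $B^3$ in (4)(b) should read $B^2$. I will read them that way.

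\emph{Main obstacle.} The main obstacle is Case (1): it needs the $2\times 2$ identity $\det(A\pm B)=\det A+\det B\pm(\mathrm{tr}A\,\mathrm{tr}B-\mathrm{tr}AB)$. I would verify this identity directly from entries, or from Cayley--Hamilton via $\mathrm{tr}(X)^2-\mathrm{tr}(X^2)=2\det X$. That derivation needs $2$ cancellable, so for arbitrary commutative $S$ the entrywise check is the safer route. The other cases are sign bookkeeping in the products above.
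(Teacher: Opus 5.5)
Your argument is correct. Its case split and the product identities in cases (2)--(4) are the same as the paper's. The differences are in case (1) and in how you pass from the degree-3 elements to $A\pm B$.

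\textbf{Case (1).} The paper cites the Khurana--Lam corollary that, when $\det A=\det B=0$, $A+B$ is a unit iff $A-B$ is. You instead prove this from $\det(A\pm B)=\det A+\det B\pm(\mathrm{tr}A\,\mathrm{tr}B-\mathrm{tr}(AB))$. That makes the proof self-contained and shows the fact has nothing to do with tripotency. You are also right that checking the identity entrywise avoids any division by $2$.

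\textbf{The reduction to $A\pm B$.} The paper invokes Theorem \ref{THM: Invertability of anticommutator vs commutator}, a left/right-inverse argument that works in any ring. You take determinants of the three factorizations instead. This relies on commutativity of $S$ throughout, but it gives slightly more:
\begin{enumerate}
\item $\det[A,B]_3^L=\det[A,B]_3^R$, so the two degree-3 commutators are units simultaneously.
\item In cases (3)--(4), $(A+B)(A-B)$ and $(A-B)(A+B)$ have the same determinant, so the two hypotheses in (3)(a), and likewise in (4)(a), are equivalent.
\end{enumerate}
The paper's route keeps the reduction purely ring-theoretic. Commutativity enters there only through the determinant criterion for units and the cited corollary.

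Your corrections of the typos ($\det(B)$ in (2), $B^2$ in (4)(b)) agree with what the paper's proof actually uses.
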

\begin{proof}
   Firstly, suppose that $A$ is an $n$-potent element in $M_2(S)$. Then, $A^n=A$ and hence $\det(A^n)=\det(A)$. This reduces to $(\det(A))^n=\det(A)$ i.e. $\det(A)$ is an $n$-potent in $S$. Since $S$ has only trivial idempotents, $\det(A)$ is either $0$ or a unit in $S$ (see Lemma \ref{LEM: n-potents in rings with only trivial idempotents}). \newline For (1): under the condition that $\det(A)=\det(B)=0$ where $A$ and $B$ are matrices in $M_2(S)$, we have that $A+B$ is a unit if and only if $A-B$ is a unit (see \cite[Corollary~4.9]{khurana2018idempotents}). From the proof of \Cref{THM: Invertability of anticommutator vs commutator}, we get that $<A,B>_3$ is a unit if and only if $[A,B]_3^R$ and $[A,B]_3^L$ are units. \newline For (2): since $\det(A)$ and $\det(B)$ are both units in $S$. Therefore, $A$ and $B$ are units in $M_2(S)$ (see \cite[Theorem]{grinshpon2008invertibility} and \cite{khurana2009short} ). Also, since $A$ and $B$ are tripotents, this gives $A^2=B^2=I$. Consider the following equalities: 
   \begin{align*}
     (A+B)(A-B)=A^2-AB+BA-B^2=-[A,B]^L_2 \\
     (A-B)(A+B)=A^2+AB-BA-B^2=[A,B]^L_2
   \end{align*}From the above equalities, $[A,B]_2^L$ is a unit if and only if $(A+B)$ and $(A-B)$ are units. This in addition to \Cref{THM: Invertability of anticommutator vs commutator} gives the desired result.  
   \newline For (3) and (4): We provide a detailed proof of (3), as (4) is a symmetric case of (3) the proof follows using similar arguments. We just need to observe that the following equalities hold:
   \begin{align*}
      (A+B)(A-B)=&A^2-I-[A,B]_2^L\\
      (A-B)(A+B)=& A^2-I-[A,B]_2^R
   \end{align*}
 The desired result follows using the same line of argument as in the proof of part (2).
   \end{proof}
\section{The Properties\texorpdfstring{ $K_n$ and $\overline{K_n}$}{Kn and Kn-bar} on Subrings of Matrix Rings}\label{Section S3}
 In \cite{khurana2018idempotents}, the authors define Property $K$ and Property $\overline{K}$ as the existence of two idempotent elements in the ring such that their commutator and anticommutator are units respectively. This definition can be naturally extended to the case of $n$-potent elements as Property $K_n$ and Property $\overline{K_n}$ respectively (see Definitions \ref{DEF: Property Kn} and \ref{DEF: Property Kn bar}). In this section we show that these properties are inherited by the formal triangular rings, upper triangular $T_m(S)$ and upper triangular rings with equal diagonal entries $D_m(S)$ from their base rings.

\begin{definition}\label{DEF: Property Kn}
  For any $n \geq 2$, a ring $S$ is said to satisfy \textit{Property $K_n$} if there exists two $n$-potent elements $a,b \in S$ such that $[a,b]_n^L$ and $[a,b]_n^R$ are units in $S$.  
\end{definition}
\begin{definition}\label{DEF: Property Kn bar}
  For any $n \geq 2$, a ring $S$ is said to satisfy \textit{Property $\overline{K_n}$} if there exists two $n$-potent elements $a,b \in S$ such that $<a,b>_n$ is a  unit in $S$.  
\end{definition}

\begin{remark}\label{REM: K_n and K_n bar}
We observe the following:
\begin{enumerate}
    \item For a ring $S$, the properties $K_2$ and $\overline{K_2}$ correspond to properties $K$ and $\overline{K}$ respectively (in the sense of \cite{khurana2018idempotents}).
    \item Due to Corollary \ref{COR: Corollary for anticommutator vs commutator for n=2}, a ring satisfying Property $K_2$ aslo satisfies Property $\overline{K_2}$. This statement is independently proved in \cite[Theorem A]{khurana2018idempotents}. 
\end{enumerate}
\end{remark}


Let $S$ be a ring, $T_m(S)$ be the ring of $m \times m$ upper triangular matrices over $S$ and $D_m(S)$ denote the ring of $m \times m$ upper triangular matrices over $S$ with equal diagonal entries. 
$$D_m(S)=\big\lbrace (a_{ij}) \in T_m(S)\  |\ a_{11} =a_{22} = \cdots = a_{mm} \big\rbrace $$
We now show that the properties $K_n$ and $\overline{K_n}$ hold for the rings $D_m(S)$ and $T_m(S)$ if and only if they hold for the ring $S$ itself. 
\begin{theorem}\label{THM: Property Kn for D_m(S)}
    Let $S$ be a ring. Then the following statements hold:
    \begin{enumerate}
        \item For any $m \geq 1$ and any $n \geq 2$, $D_m(S)$ satisfies Property $K_n$ if and only if $S$ satisfies Property $K_n$. 
        \item For any $m \geq 1$ and any $n \geq 2$, $D_m(S)$ satisfies Property $\overline{K_n}$ if and only if $S$ satisfies Property $\overline{K}_n$.  
    \end{enumerate}
   
\end{theorem}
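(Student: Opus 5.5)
The plan is to use the diagonal embedding $\iota\colon S\to D_m(S)$, $s\mapsto sI_m$, and the diagonal projection $\pi\colon D_m(S)\to S$, $(a_{ij})\mapsto a_{11}$. Both are unital ring homomorphisms with $\pi\circ\iota=\mathrm{id}_S$. That $\pi$ is multiplicative follows because the $(1,1)$ entry of a product of upper triangular matrices is the product of the $(1,1)$ entries; moreover, for $D_m(S)$ every diagonal entry of a product equals $a_{11}b_{11}$, so $D_m(S)$ is closed under multiplication and $\pi$ is a homomorphism. The key observation is that $<\cdot,\cdot>_n$, $[\cdot,\cdot]_n^L$ and $[\cdot,\cdot]_n^R$ are given by fixed noncommutative polynomial expressions (Definitions \ref{DEF: Anticommutator of Higher degree} and \ref{DEF: commutators of higher degree}). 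Hence any ring homomorphism $f$ satisfies $f(<a,b>_n)=<f(a),f(b)>_n$, $f([a,b]_n^L)=[f(a),f(b)]_n^L$ and $f([a,b]_n^R)=[f(a),f(b)]_n^R$. It also sends $n$-potents to $n$-potents and units to units.

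\emph{($\Leftarrow$)} Suppose $S$ satisfies Property $K_n$ (respectively $\overline{K_n}$), witnessed by $n$-potents $a,b$. Then $aI_m,bI_m$ are $n$-potents in $D_m(S)$. By the compatibility above, $[aI_m,bI_m]_n^{L}=[a,b]_n^LI_m$, and similarly for the right commutator and the anticommutator. These are units in $D_m(S)$ with inverse $u^{-1}I_m$, where $u$ is the corresponding unit in $S$.

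\emph{($\Rightarrow$)} Suppose $A,B\in D_m(S)$ are $n$-potents whose degree $n$ commutators (respectively anticommutator) are units in $D_m(S)$. Then $\pi(A),\pi(B)$ are $n$-potents in $S$. Applying $\pi$, we get $[\pi(A),\pi(B)]_n^{L}=\pi([A,B]_n^L)$, which is a unit in $S$ because $\pi$ preserves units. The right commutator and the anticommutator are handled in the same way. This gives Property $K_n$ (respectively $\overline{K_n}$) for $S$.

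The argument is essentially formal. The only point needing care is checking that $\pi$ is a ring homomorphism on $D_m(S)$ when $S$ is noncommutative, that is, that diagonal entries multiply entrywise in upper triangular matrices. This is immediate, since $(AB)_{ii}=\sum_k a_{ik}b_{ki}=a_{ii}b_{ii}$ for upper triangular $A,B$. The same reasoning, with all diagonal projections, would also handle $T_m(S)$ later. We never need the converse fact that a matrix in $D_m(S)$ with invertible diagonal is a unit, because the forward direction only uses the images of units under $\pi$.
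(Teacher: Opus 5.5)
Your proposal is correct and follows the same route as the paper. The converse uses the scalar matrices $aI_m, bI_m$ with $<aI_m,bI_m>_n=<a,b>_nI_m$ (and likewise for the commutators), and the forward direction reads off the common diagonal entry, which is an $n$-potent whose degree $n$ expressions are units in $S$. Packaging that forward step as the unital homomorphism $\pi\colon D_m(S)\to S$ just makes precise the paper's remark ``since both $A$ and $B$ are upper triangular'', and it treats $[\cdot,\cdot]_n^L$ and $[\cdot,\cdot]_n^R$ explicitly for Property $K_n$, whereas the paper's written argument for (1) actually works with the anticommutator.
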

\begin{proof}
Firstly, we notice that for any $n$-potent element $A=(a_{ij})_{m \times m} \in D_m(S)$ we have $A^n=A$ and hence the diagonal entries $a_{ii}=a$ for all $i=1,2,\dots,m$ is an $n$-potent element in $S$. We only provide the proof of (1) as (2) can be obtaied by similar arguments.\\
For (1): If $A=(a_{ij})_{m \times m}$ and $B=(b_{ij})_{m \times m}$ are $n$-potent elements in $D_m(S)$ with $a_{ii}=a$ and $b_{ii}=b$ for all $i=1,2,\dots,m$ such that $<A,B>_n$ is a unit in $D_n(S)$. Then, since both $A$
 and $B$ are upper triangular matrices, we have that $<a,b>_n$ is a unit in $S$. However, due to our observation, both $a$ and $b$ are $n$-potents in $S$ and hence, $S$ satisfies Property $K_n$. Converserly, if $S$ satisfies Property $K_n$ then there exists $n$-potent elements $a,b \in S$ such that $<a,b>_n$ is a unit in $S$. Consider the matrices $A=a\cdot I_m$ and $B=b\cdot I_m$ where $I_m$ is the identity matrix of order $m$. By the definition of $D_m(S)$, both $A,B \in D_m(S)$. We also have the following expression for the degree $n$ anticommutator of $A \text{ and }B$:
 $$<A,B>_n=<a\cdot I_m, b\cdot I_m>_n=<a,b>_n\cdot I_m.$$
Since $<a,b>_n$ is a unit in $S$, therefore $<A,B>_n$ is a unit in $D_m(S)$.
 \end{proof}
The following lemma is a simple observation regarding the lifting of Property $K_n$ and Property $\overline{K_n}$ from subrings to the original ring that will help us to simplify the proof of \Cref{THM: Property Kn for T_m(S)}. 

\begin{lem}\label{LEM: Kn and Kn bar on subrings}
    Let $S_1$ be a ring and $S_2$ be a subring of $S_1$ such that the unity of both the rings are same i.e. $1_{S_1}=1_{S_2}$. Then for any $n \geq 2$, if $S_2$ satisfies Property $K_n$ (Property $\overline{K_n})$ then $S_1$ satisfies Property $K_n$ (Property $\overline{K_n})$.
\end{lem}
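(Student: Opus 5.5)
The plan is to argue directly from the definitions, using only that the ring operations of $S_2$ are the restrictions of those of $S_1$ and that the two rings share the same unity. Suppose $S_2$ satisfies Property $K_n$, and fix $n$-potent elements $a,b \in S_2$ such that $[a,b]_n^L$ and $[a,b]_n^R$ are units in $S_2$. I will show that the same pair $a,b$, now regarded as elements of $S_1$, witnesses Property $K_n$ for $S_1$.

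\textbf{Step 1.} The first step is to see that $a$ and $b$ remain $n$-potent in $S_1$. Multiplication in $S_2$ is the restriction of multiplication in $S_1$, so the power $a^n$ computed in $S_2$ equals $a^n$ computed in $S_1$. Hence $a^n=a$ holds in $S_1$, and likewise $b^n=b$.

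\textbf{Step 2.} Next I check that the degree $n$ commutators and anticommutator do not depend on the ambient ring. By Definitions \ref{DEF: Anticommutator of Higher degree} and \ref{DEF: commutators of higher degree}, $<a,b>_n$, $[a,b]_n^L$ and $[a,b]_n^R$ are noncommutative polynomial expressions in $a$ and $b$. They involve only sums, differences and products, with no use of the unity. So their values in $S_2$ and in $S_1$ coincide.

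\textbf{Step 3.} The last step is the transfer of invertibility, which is where the hypothesis $1_{S_1}=1_{S_2}$ is needed. Suppose $u\in S_2$ satisfies $u[a,b]_n^L=[a,b]_n^Lu=1_{S_2}$. Then the same equations hold in $S_1$ with right-hand side $1_{S_1}$, so $[a,b]_n^L$ is a unit in $S_1$. The same argument applies to $[a,b]_n^R$, which gives Property $K_n$ for $S_1$.

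The case of Property $\overline{K_n}$ is identical, with $<a,b>_n$ in place of the two commutators. I do not expect a genuine obstacle here. The only point needing care is Step 3: without $1_{S_1}=1_{S_2}$, an inverse in $S_2$ would only give $xu=e$ for an idempotent $e\neq 1_{S_1}$, and the element need not be a unit in $S_1$.
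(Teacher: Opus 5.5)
Your proof is correct and follows essentially the same route as the paper: the witnessing $n$-potents $a,b\in S_2$ stay $n$-potent in $S_1$, the degree $n$ expressions have the same values in both rings, and an inverse in $S_2$ is an inverse in $S_1$ because $1_{S_1}=1_{S_2}$. Your write-up is in fact slightly more careful, since for Property $K_n$ you transfer both $[a,b]_n^L$ and $[a,b]_n^R$, whereas the paper writes its $K_n$ case in terms of $<a,b>_n$.
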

\begin{proof}
Let $n > 1$, we prove the theorem for Property $K_n$ as the statement for Property $\overline{K_n}$ follows using exactly similar arguments.
    Suppose that $S_2$ satisfies property $K_n$ then there exists $n$-potent elements $a,b \in S_2$ such that $<a,b>_n$ is a unit in $S$. Therefore, there exists an element $s \in S_2$ such that the following equation holds:
    $$s \cdot <a,b>_n=<a,b>_n\cdot s=1_{S_2}$$
    However, since $S_2$ is a subring of $S_1$, the element $s \in S_1$ and $1_{S_2}=1_{S_1}$ we have: 
    $$s \cdot <a,b>_n=<a,b>_n\cdot s=1_{S_1}$$
    Therefore, $S_1$ satisfies Property $K_n$. 
\end{proof}
\begin{theorem}\label{THM: Property Kn for T_m(S)}
    Let $S$ be a ring. Then the following statements hold:
    \begin{enumerate}
        \item For any $m \geq 1$ and any $n \geq 2$, $T_m(S)$ satisfies Property $K_n$ if and only if $S$ satisfies Property $K_n$. 
        \item For any $m \geq 1$ and any $n \geq 2$, $T_m(S)$ satisfies Property $\overline{K_n}$ if and only if $S$ satisfies Property $\overline{K}_n$.  
    \end{enumerate}
   
\end{theorem}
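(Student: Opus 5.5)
The argument has two directions: the easy one uses \Cref{LEM: Kn and Kn bar on subrings} with $D_m(S)\subseteq T_m(S)$, and the other projects onto a diagonal entry. The key fact is that taking the $(i,i)$ entry, $\pi_i\colon T_m(S)\to S$, $(x_{jk})\mapsto x_{ii}$, is a unital ring homomorphism for each $i$. Indeed, the diagonal of a product of upper triangular matrices is the entrywise product of the diagonals.

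\emph{Sufficiency.} Suppose $S$ satisfies Property $K_n$ (resp.\ $\overline{K_n}$). By \Cref{THM: Property Kn for D_m(S)}, $D_m(S)$ satisfies the same property; concretely, take $A=a\cdot I_m$ and $B=b\cdot I_m$. Now $D_m(S)$ is a subring of $T_m(S)$ with the same identity $I_m$. So \Cref{LEM: Kn and Kn bar on subrings} shows that $T_m(S)$ satisfies Property $K_n$ (resp.\ $\overline{K_n}$). One remark is needed here. The lemma is written for $<a,b>_n$, but its argument uses only that an element invertible in the subring is invertible in the larger ring. It therefore applies verbatim to $[a,b]_n^L$ and $[a,b]_n^R$, and I will say so explicitly.

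\emph{Necessity.} Let $A,B\in T_m(S)$ be $n$-potent with $[A,B]_n^L$ and $[A,B]_n^R$ units (resp.\ $<A,B>_n$ a unit). Fix, say, $i=1$ and put $a=\pi_1(A)$, $b=\pi_1(B)$. Since $\pi_1$ is a ring homomorphism, $a^n=\pi_1(A^n)=\pi_1(A)=a$, and likewise $b^n=b$, so $a$ and $b$ are $n$-potent in $S$. All three expressions are noncommutative polynomials in the two variables with integer coefficients. Hence
\[
\pi_1([A,B]_n^L)=[a,b]_n^L,\qquad \pi_1([A,B]_n^R)=[a,b]_n^R,\qquad \pi_1(<A,B>_n)=<a,b>_n .
\]
A unital homomorphism carries units to units: if $UX=XU=I_m$, then $\pi_1(U)\pi_1(X)=\pi_1(X)\pi_1(U)=1$. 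So the required elements are units in $S$, and $S$ satisfies Property $K_n$ (resp.\ $\overline{K_n}$).

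There is no real obstacle here. The only points needing care are the multiplicativity of $\pi_i$ on upper triangular matrices, which is a one-line check that $(XY)_{ii}=\sum_k x_{ik}y_{ki}=x_{ii}y_{ii}$ because $x_{ik}=0$ for $k<i$ and $y_{ki}=0$ for $k>i$, and the extension of \Cref{LEM: Kn and Kn bar on subrings} to commutators noted above. The case $m=1$ is trivial, since $T_1(S)=S$.
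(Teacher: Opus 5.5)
Your proof is correct and follows the paper's argument. For necessity you read off a diagonal entry of the $n$-potents and of the relevant unit; for sufficiency you pass through $D_m(S)$ using $a\cdot I_m$, $b\cdot I_m$ and then invoke Lemma~\ref{LEM: Kn and Kn bar on subrings}. Your version is more careful than the paper's, which writes out only the anticommutator case even under the heading of Property $K_n$; you check that the diagonal projection is a unital homomorphism and treat $[A,B]_n^L$ and $[A,B]_n^R$ explicitly.
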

\begin{proof}
Similar to \Cref{THM: Property Kn for D_m(S)}, we only give the proof of (1) as the proof of (2) follows using the same arguments. Again, for any $n$-potent element $A=(a_{ij})_{m \times m} \in T_m(S)$ we have $A^n=A$ and hence the diagonal entries $a_{ii}$ for all $i=1,2,\dots,m$ is an $n$-potent element.  If $A=(a_{ij})_{m \times m}$ and $B=(b_{ij})_{m \times m}$ are $n$-potent elements in $T_m(S)$  such that $<A,B>_n$ is a unit in $D_n(S)$. Then, since both $A$
 and $B$ are upper triangular matrices, we have that $<a_{ii},b_{ii}>_n$ is a unit in $S$ for all $i=1,2,\dots,m$. However, from our observation, each $a_{ii}$ and $b_{ii}$ are $n$-potent elements in $S$. Therefore, $S$ satisfies Property $K_n$. Conversely, suppose that $S$ satisfies Property $K_n$ then, the ring $D_m(S)$ also satisfies Property $K_n$ for all $m \geq 1$. Also, $D_m(S)$ is a subring of $T_m(S
)$ for all $m \geq 1$ with the property that $I_m=1_{D_m(S)}=1_{T_m(S)}$. Therefore by Lemma \ref{LEM: Kn and Kn bar on subrings}, $T_m(S)$ satisfies Property $K_n$ for all $m \geq 1$. 
\end{proof}
The following is an immediate corollary of the above theorem. 
\begin{cor}\cite[Proposition~2.18]{khurana2018idempotents}\label{COR: Property K2 for T_m(S)}
 Let $S$ be a ring then for any $m \geq 1$, the following hold:
 \begin{enumerate}
     \item $T_m(S)$ satisfies Property $K_2$ if and only if $S$ satisfies Property $K_2$. 
     \item $T_m(S)$ satisfies Property $\overline{K_2}$ if and only if $S$ satisfies Property $\overline{K_2}$.      
 \end{enumerate}
\end{cor}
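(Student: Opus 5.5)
This is the special case $n=2$ of \Cref{THM: Property Kn for T_m(S)}, so I will specialize that theorem rather than reprove anything. First, by Remark \ref{REM: K_n and K_n bar}(1), Properties $K_2$ and $\overline{K_2}$ are exactly Khurana--Lam's Properties $K$ and $\overline{K}$, defined through idempotents. Second, the $2$-potent elements of $S$ and of $T_m(S)$ are precisely the idempotents. Third, $[a,b]_2^L=[a,b]$, $[a,b]_2^R=[b,a]=-[a,b]$ and $<a,b>_2=ab+ba$ (Remarks \ref{remark on anti-commutator} and \ref{REM: Remark on commutators of higher degree}). So the requirement in Definition \ref{DEF: Property Kn} that both $[a,b]_2^L$ and $[a,b]_2^R$ be units collapses to the single requirement that $[a,b]$ be a unit. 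This matches the statement of \cite[Proposition~2.18]{khurana2018idempotents}.

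With that identification, parts (1) and (2) follow by taking $n=2$ in parts (1) and (2) of \Cref{THM: Property Kn for T_m(S)}.

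For completeness I would recall the two directions in this case, to confirm nothing special happens at $n=2$. For the forward direction, let $A,B\in T_m(S)$ be idempotents. Then every diagonal entry $a_{ii},b_{ii}$ is idempotent in $S$, because the diagonal of a product of upper triangular matrices is the entrywise product of the diagonals. If $[A,B]$ (respectively $AB+BA$) is a unit in $T_m(S)$, then its diagonal entries $[a_{ii},b_{ii}]$ (respectively $a_{ii}b_{ii}+b_{ii}a_{ii}$) are units in $S$, since a unit upper triangular matrix has unit diagonal entries. For the converse, take $A=aI_m$ and $B=bI_m$, which lie in $D_m(S)\subseteq T_m(S)$, and apply Lemma \ref{LEM: Kn and Kn bar on subrings}.

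The only point needing care is the forward direction's use of the fact that invertibility of an upper triangular matrix forces its diagonal entries to be units. The inverse of an upper triangular matrix is again upper triangular, since $T_m(S)$ is a ring containing that inverse. Comparing diagonals in $UU^{-1}=U^{-1}U=I$ then gives two-sided inverses on the diagonal, even when $S$ is noncommutative.
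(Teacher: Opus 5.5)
Your proposal is correct and takes the same route as the paper: its proof of this corollary is simply to set $n=2$ in Theorem~\ref{THM: Property Kn for T_m(S)}, and your recap of the two directions matches the paper's argument for that theorem (diagonal entries of a unit of $T_m(S)$ for the forward implication; $aI_m$, $bI_m\in D_m(S)$ together with Lemma~\ref{LEM: Kn and Kn bar on subrings} for the converse). Your observation that $[a,b]_2^R=-[a,b]_2^L$ reduces Property $K_2$ to invertibility of $[a,b]$, and your direct treatment of the commutator case, are if anything cleaner than the paper's write-up, which argues with $<A,B>_n$ even when proving the $K_n$ statement.
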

\begin{proof}
    The corollary is a direct consequence of Theorem \ref{THM: Property Kn for T_m(S)} by setting $n=2$. 
\end{proof}

\begin{theorem}\label{THM: Property Kn for triangular matrix rings}
    Let $S_1$ and $S_2$ be  rings and $M$ be any $(S_1,S_2)$-bimodule and let $T=\begin{pmatrix}
            S_1&M\\
            0&S_2
        \end{pmatrix}$  be the formal triangular ring. Then the following statements hold:
    \begin{enumerate}
        \item For any $n \geq 2$, the ring $T$ satisfies Property $K_n$ if and only if $S_1$ and $S_2$ satisfy Property $K_n$. 
        \item For any $n \geq 2$, the ring $T$ satisfies Property $\overline{K_n}$ if and only if $S_1$ and $S_2$ satisfy Property $\overline{K_n}$. 
    \end{enumerate}
    \end{theorem}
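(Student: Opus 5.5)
The plan is to handle the formal triangular ring exactly as $T_m(S)$ was handled in \Cref{THM: Property Kn for T_m(S)}: project onto the diagonal for one direction, and embed diagonally for the other. I will write out (1) in detail; (2) follows the same way with $<\cdot,\cdot>_n$ in place of the pair of commutators.

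\emph{Step 1: the diagonal maps are ring homomorphisms.} Set $\pi_1\colon T\to S_1$ and $\pi_2\colon T\to S_2$ by
\[
\pi_1\begin{pmatrix} x&m\\ 0&y\end{pmatrix}=x,\qquad \pi_2\begin{pmatrix} x&m\\ 0&y\end{pmatrix}=y .
\]
Since
\[
\begin{pmatrix} x&m\\ 0&y\end{pmatrix}\begin{pmatrix} x'&m'\\ 0&y'\end{pmatrix}=\begin{pmatrix} xx'&xm'+my'\\ 0&yy'\end{pmatrix},
\]
both maps are unital ring homomorphisms. Consequently:
\begin{itemize}
\item they send $n$-potents to $n$-potents and units to units;
\item they commute with any noncommutative polynomial expression, so $\pi_i([A,B]_n^L)=[\pi_i(A),\pi_i(B)]_n^L$, and likewise for $[\cdot,\cdot]_n^R$ and $<\cdot,\cdot>_n$.
\end{itemize}

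\emph{Step 2: if $T$ has Property $K_n$, so do $S_1$ and $S_2$.} Let $A,B\in T$ be $n$-potents with $[A,B]_n^L$ and $[A,B]_n^R$ units in $T$. Write $A=\begin{pmatrix} a_1&m\\0&a_2\end{pmatrix}$ and $B=\begin{pmatrix} b_1&m'\\0&b_2\end{pmatrix}$. By Step 1, $a_i,b_i$ are $n$-potents in $S_i$ and $[a_i,b_i]_n^L$, $[a_i,b_i]_n^R$ are units in $S_i$. Hence each $S_i$ has Property $K_n$.

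\emph{Step 3: if $S_1$ and $S_2$ have Property $K_n$, so does $T$.} Choose witnesses $a_1,b_1\in S_1$ and $a_2,b_2\in S_2$, and put
\[
A=\begin{pmatrix} a_1&0\\0&a_2\end{pmatrix},\qquad B=\begin{pmatrix} b_1&0\\0&b_2\end{pmatrix}.
\]
These lie in the subring of diagonal elements, which is isomorphic to $S_1\times S_2$ and contains $1_T$. In the product ring every word, and hence every commutator, is computed coordinatewise. So $A,B$ are $n$-potent, and
\[
[A,B]_n^L=\mathrm{diag}\bigl([a_1,b_1]_n^L,\,[a_2,b_2]_n^L\bigr),
\]
with the analogous formula for $[A,B]_n^R$. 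Each of these is a unit in $S_1\times S_2$, with inverse the diagonal matrix of the coordinatewise inverses. By \Cref{LEM: Kn and Kn bar on subrings}, $T$ then has Property $K_n$; one can also simply check directly that the diagonal inverse works in $T$.

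\emph{Main subtlety.} The only real obstacle is Step 3. The witnesses in $S_1$ and $S_2$ are chosen independently, so the two pairs have to be combined in a way that does not interact. Diagonal embedding works precisely because the off-diagonal entry contributes nothing when both off-diagonal entries are $0$, which makes the product ring $S_1\times S_2$ an honest unital subring of $T$.
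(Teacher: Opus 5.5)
Your proof is correct and is essentially the paper's argument. For one direction you read off the diagonal entries of the witnesses in $T$, and for the other you place independently chosen witnesses from $S_1$ and $S_2$ on the diagonal. Using the unital homomorphisms $\pi_1,\pi_2$ makes precise the paper's appeal to ``the multiplication in $T$''. You also correctly work with $[\cdot,\cdot]_n^L$ and $[\cdot,\cdot]_n^R$ in part (1), whereas the paper's written argument for (1) actually runs through $<\cdot,\cdot>_n$.
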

 \begin{proof}
     Let $A=\begin{pmatrix}
         r&m\\
         0&s
     \end{pmatrix} \in T$ be an $n$-potent element. Due to the multiplication operation in $T$, the elements $r$ and $s$ are $n$-potent elements in $S_1$ and $S_2$ respectively. In order to prove the theorem, we give the proof of (1) as (2) follows using similar line of argument. Suppose that $A=\begin{pmatrix}
         r_1 & m_1\\
         0 & s_1
     \end{pmatrix}, B=\begin{pmatrix}
         r_2&m_2\\
         0&s_2
     \end{pmatrix} \in T$ are two $n$-potent elements such that $<A,B>_n$ is a unit in $T$. Using the multiplication in $T$, we have that $<r_1,r_2>_n$ is a unit in $S_1$ and $<s_1,s_2>_n$ is a unit in $S_2$. By our observation, the elements $r_1,r_2$ are $n$-potent in $S_1$ and $s_1,s_2$ are $n$-potent in $S_2$.  Conversely, let $r_1,r_2$ be $n$-potent in $S_1$ and $s_1,s_2$ be $n$-potent in $S_2$ such that $<r_1,r_2>_n$ is a unit in $S_1$ and $<s_1,s_2>$ is a unit in $S_2$. \text{Consider the elements} $ A=\begin{pmatrix}
         r_1 & 0\\
         0 & s_1
     \end{pmatrix} \text{ and } B=\begin{pmatrix}
         r_2&0\\
         0&s_2
     \end{pmatrix} \in T$, clearly both $A$ and $B$ are $n$-potent elements in $T$. Also, we have the following expression for $<A,B>_n$,
     $$<A,B>_n=\begin{pmatrix}
         <r_1,r_2>_n &0\\
         0&<s_1,s_2>_n
     \end{pmatrix} \in T$$
Since $<A,B>_n$ is a unit in $T$, $<r_1,r_2>_n$ is a unit in $S_1$ and $<s_1,s_2>_n$ is a unit in $S_2$. Therefore, $T$ satisfies Property $K_n$ if and only if $S_1$ and $S_2$ individually satisfy Property $K_n$. 
 \end{proof}
 The previous theorem recovers the following result from \cite{khurana2018idempotents} as an immediate corollary.
\begin{cor}\cite[Proposition~2.17]{khurana2018idempotents}\label{COR: Property K2 for triangular matrix rings}
    Let $R$ and $S$ be  rings and $M$ be any $(R,S)$-bimodule and let $T=\begin{pmatrix}
            R&M\\
            0&S
        \end{pmatrix}$  be the formal triangular ring. Then the following statements hold:
    \begin{enumerate}
        \item The ring $T$ satisfies Property $K_2$ if and only if $R$ and $S$ satisfy Property $K_2$. 
        \item The ring $T$ satisfies Property $\overline{K_2}$ if and only if $R$ and $S$ satisfy Property $\overline{K_2}$. 
    \end{enumerate}
\end{cor}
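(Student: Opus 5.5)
This is the case $n=2$ of \Cref{THM: Property Kn for triangular matrix rings}, so the plan is to specialize that theorem and then check that the statement really says the same thing. First, by Remark \ref{REM: K_n and K_n bar}(1), Property $K_2$ is Property $K$ of \cite{khurana2018idempotents}. It asks for idempotents $a,b$ with $[a,b]_2^L$ and $[a,b]_2^R$ units. By Remark \ref{REM: Remark on commutators of higher degree}(3) these are $[a,b]$ and $-[a,b]$, so the condition is just that $[a,b]$ is a unit. Likewise Property $\overline{K_2}$ is Property $\overline{K}$, since $<a,b>_2=ab+ba$ by Remark \ref{remark on anti-commutator}(2). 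Taking $S_1=R$ and $S_2=S$ in \Cref{THM: Property Kn for triangular matrix rings}, with the same bimodule $M$, then gives both parts at once.

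Because the proof of \Cref{THM: Property Kn for triangular matrix rings} wrote out only the anticommutator case, I would verify the commutator case directly for $n=2$; this is the main thing to check. Let $A=\begin{pmatrix} r_1&m_1\\ 0&s_1\end{pmatrix}$ and $B=\begin{pmatrix} r_2&m_2\\ 0&s_2\end{pmatrix}$ be idempotents of $T$. Their diagonal entries are idempotents of $R$ and of $S$. The diagonal of $[A,B]$ is $([r_1,r_2],[s_1,s_2])$.

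The one fact needed is that an upper triangular element of $T$ is a unit if and only if both diagonal entries are units. For the forward direction, multiplying $XY=YX=1$ shows each diagonal entry has a two-sided inverse. For the converse, the inverse of $\begin{pmatrix} u&m\\ 0&v\end{pmatrix}$ is $\begin{pmatrix} u^{-1}&-u^{-1}mv^{-1}\\ 0&v^{-1}\end{pmatrix}$.

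With this, a unit $[A,B]$ in $T$ gives units $[r_1,r_2]$ in $R$ and $[s_1,s_2]$ in $S$. Conversely, given such idempotents in $R$ and $S$, the diagonal idempotents $\mathrm{diag}(r_1,s_1)$ and $\mathrm{diag}(r_2,s_2)$ in $T$ have commutator $\mathrm{diag}([r_1,r_2],[s_1,s_2])$, which is a unit. The anticommutator case (2) is identical, with $ab+ba$ replacing $[a,b]$.
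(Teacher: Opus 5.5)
Your proposal is correct and follows the paper's route. The paper proves this corollary simply by setting $n=2$ in \Cref{THM: Property Kn for triangular matrix rings}, and your direct verification is that theorem's diagonal-entry argument specialized to $n=2$. Your explicit handling of the commutator case usefully fills a gap in the paper, since its written proof of part (1) actually works with $<A,B>_n$. So does your explicit inverse $\begin{pmatrix} u^{-1}&-u^{-1}mv^{-1}\\ 0&v^{-1}\end{pmatrix}$, which justifies the unit criterion in $T$ that the paper leaves implicit.
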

 \begin{proof}
     The proof is clear by setting $n=2$ in Theorem \ref{THM: Property Kn for triangular matrix rings}. 
 \end{proof}
 \section*{Acknowledgments}
 The first author is thankful to the Government of India for supporting him in this work through the Prime Minister Research Fellowship. 
 \bibliographystyle{abbrv}
\bibliography{refs}

\end{document}